\DeclareMathOperator{\TC}{TC}
\newtheorem{theorem}{Theorem}[section]
\newtheorem{lemma}[theorem]{Lemma}
\newtheorem{observation}[theorem]{Observation}
\theoremstyle{definition}
\theoremstyle{remark}
\begin{document} 

\title{On $k$-coalition in graphs: bounds and exact values}

\author{Bo\v{s}tjan Bre\v{s}ar$\,^{a,b}$, Michael A. Henning$\,^{c}$, \, and \, Babak Samadi$\,^{b}$ \\ \\
$^a$ Faculty of Natural Sciences and Mathematics \\
University of Maribor, Slovenia \\
$^b$ Institute of Mathematics, Physics and Mechanics \\
University of Ljubljana, Slovenia\\
\small \tt Email: bostjan.bresar@um.si \\
\small \tt Email: babak.samadi@imfm.si \\
\\
$^{c}$ Department of Mathematics and Applied Mathematics \\
University of Johannesburg, South Africa\\
\small \tt Email: mahenning@uj.ac.za
}

\date{}
\maketitle

\begin{abstract}
Given a graph $G=\big{(}V(G),E(G)\big{)}$, a set $S\subseteq V(G)$ is called a $k$-dominating set if every vertex in $V(G)\setminus S$ has at least $k$ neighbors in $S$. Two disjoint sets $A,B\subset V(G)$ form a $k$-coalition in $G$ if neither set is a $k$-dominating set in $G$ but their union $A\cup B$ is a $k$-dominating set. A partition $\Omega$ of $V(G)$ is a $k$-coalition partition if each set in $\Omega$ is either a $k$-dominating set of cardinality $k$ or forms a $k$-coalition with another set in $\Omega$. The $k$-coalition number $C_{k}(G)$ equals the maximum cardinality of a $k$-coalition partition of $G$. In this work, we give general upper and lower bounds on this parameter. In particular, we show that if $G$ has minimum degree $\delta \ge 2$ and maximum degree $\Delta \ge 4 \lfloor \delta/2 \rfloor$, then $C_{2}(G) \leq (\Delta-2\lfloor \delta/2 \rfloor+1)(\lfloor \delta/2 \rfloor+1) + \lceil \delta/2 \rceil+1$, and this bound is sharp. If $T$ is a tree of order~$n \ge 2$, then we prove the upper bound $C_{2}(T) \leq \big\lfloor \frac{n}{2}\big\rfloor+1$ and we characterize the extremal trees achieving equality in this bound. We determine the exact value of $C_{k}(G)$ for any cubic graph $G$ and $k\geq2$. Finally, we give the exact value of $C_{k}$ for any complete bipartite graph, which completes a partial result and resolves an issue from an earlier paper.
\end{abstract}

{\small \textbf{Keywords:}  $k$-coalition, $k$-coalition number, $k$-dominating sets, cubic graphs, trees. } \\
\indent {\small \textbf{2020 Mathematical Subject Classification: 05C69}}


\section{Introduction and preliminaries}

Coalition in graphs has been a hot research topic in recent years. Since 2020, when coalition in graphs was introduced by Haynes, Hedetniemi, Hedentiemi, McRae and Mohan~\cite{hhhmm}, a number of papers were devoted to this graph-theoretic concept. Many variations of the (standard) coalition number of a graph were introduced. One of the most recent variations is the $k$-coalition number of a graph; see~\cite{JAB}.

Throughout this paper, we consider $G$ as a finite simple graph with vertex set $V(G)$ and edge set $E(G)$. We use~\cite{West} as a reference for terminology and notation which are not explicitly defined here. The ({\em open}) {\em neighborhood} of a vertex $v$ is denoted by $N(v)$, and its {\em closed neighborhood} is $N[v]=N(v)\cup \{v\}$. The {\em degree} of a vertex $v$ is $\deg(v)=|N(v)|$. The {\em minimum} and {\em maximum degrees} of $G$ are denoted by $\delta(G)$ and $\Delta(G)$, respectively.

For $k \ge 1$, a set $S\subseteq V(G)$ is a \textit{$k$-dominating set} of $G$ if every vertex $v\in V(G)\setminus S$ has at least $k$ neighbors in $S$, where two vertices are \emph{neighbors} if they are adjacent. A thorough treatise on domination in graphs can be found in~\cite{HaHeHe-20,HaHeHe-21,HaHeHe-23}. For a comprehensive overview of $k$-domination in graphs, the reader can consult the excellent survey by Hansberg and Volkmann~\cite{HaVo-20}. Two disjoint sets $U,V\subseteq V(G)$ form a $k$-\textit{coalition} if neither set is a $k$-dominating set in $G$ but their union is a $k$-dominating set. A $k$-\textit{coalition partition} $\Theta$ of $G$ is a partition of $V(G)$ in which every set is either a $k$-dominating set of cardinality $k$ or forms a $k$-coalition with another set in $\Theta$. The $k$-\textit{coalition number} $C_{k}(G)$ is the maximum cardinality taken over all $k$-coalition partitions of $G$. For the sake of convenience, by a $C_{k}(G)$-partition we mean a $k$-coalition partition of $G$ of maximum cardinality.

In this paper, we give general upper and lower bounds on the $k$-coalition number. In particular, for $k=2$, a sharp upper bound on $C_{2}(G)$ just in terms of minimum and maximum degrees is given, and the bound is widely sharp in the sense that for any even minimum degree there exist families of graphs that attain the bound.  Moreover, if $T$ is a tree of order~$n \ge 2$, we prove the upper bound $C_{2}(T) \leq \big\lfloor \frac{n}{2}\big\rfloor+1$ and we characterize the extremal trees achieving equality in this bound. We determine the exact value of $C_{k}(G)$, where $k\ge 2$ and $G$ is a cubic graph (that is, a graph whose vertices have degree $3$). 

Upper and lower bounds on the $k$-coalition number of complete bipartite graphs were presented  in~\cite{JAB}. In the last section, we first show that the result in~\cite{JAB} is incorrect by presenting an infinite family of counterexamples. Then, we give the exact formula for $C_{k}(K_{s,t})$ that works for all integers $k,s,t$.

We end this section with a remark that if $k\geq2$ and $|V(G)|\ge 2$, then no set in a $C_{k}(G)$-partition is a $k$-dominating set of cardinality $k$. In fact, if there exists such a set $A$ in a $C_{k}(G)$-partition $\Theta$, by replacing $A$ with $A_{1}$ and $A_{2}$ in $\Theta$, where $\{A_{1},A_{2}\}$ is any partition of $A$, we get a $k$-coalition partition of $G$ of cardinality $|C_{k}(G)|+1$, which is impossible.


\section{General bounds}

Clearly, $C_1(K_{n})=n$ for $n\in\mathbb{N}$, and $C_{k}(K_{1})=1$ for $k\in \mathbb{N}$. So, in the following formula, we assume that $k\geq2$ and $n\geq2$:
\begin{equation}\label{complete}
C_{k}(K_{n})=\left \{
\begin{array}{lll}
2, & \textrm{if}\ \ k\geq n;\\
n-k+2, & \textrm{if}\ \ k\leq n-1.
\end{array}
\right.
\end{equation}
Note that the formula $C_{k}(K_{n})=n-k+2$, for $k\in \{2,\ldots,n-1\}$, is given in~\cite{JAB}.

\begin{theorem}\label{lower2}
For any graph $G$ on $n\geq2$ vertices and integer $k\geq2$, $C_{k}(G)\geq \delta(G)-k+3$. Moreover, this bound is sharp.
\end{theorem}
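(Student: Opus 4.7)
The plan is to build an explicit $k$-coalition partition of $G$ with $\delta(G)-k+3$ parts, using a vertex of minimum degree as the organising object. Sharpness will follow immediately from the complete graphs, since equation~(\ref{complete}) yields $C_k(K_n)=n-k+2=\delta(K_n)-k+3$ whenever $2\le k\le n-1$, so no separate construction is needed for the ``moreover'' part.

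Fix a vertex $v\in V(G)$ with $\deg(v)=\delta:=\delta(G)$ and write $N(v)=\{u_1,\ldots,u_\delta\}$. When $\delta\geq k-1$, the candidate partition is
\[
\Theta=\bigl\{\{v\},\,\{u_1\},\ldots,\{u_{\delta-k+1}\},\,C\bigr\},\qquad C=V(G)\setminus\{v,u_1,\ldots,u_{\delta-k+1}\},
\]
which has exactly $\delta-k+3$ parts (with no $u_i$-singletons appearing when $\delta=k-1$, in which case $\Theta$ just collapses to the two-set partition $\{\{v\},V(G)\setminus\{v\}\}$). When $\delta\le k-2$, the desired inequality reduces to $C_k(G)\geq 1$, and the same trivial two-part partition already works as a $k$-coalition partition because $\deg(v)<k$ forces both parts to be non-$k$-dominating while their union is all of $V(G)$.

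Verification in the main case reduces to a direct degree count. Every singleton in $\Theta$ is non-$k$-dominating since $k\geq 2$, and $C$ fails to $k$-dominate $v$, whose only neighbors in $C$ are the $k-1$ vertices $u_{\delta-k+2},\ldots,u_\delta$. To check that $\{u_i\}$ and $C$ form a $k$-coalition, I would inspect the $\delta-k+1$ vertices outside $\{u_i\}\cup C$: the vertex $v$ has exactly the $k$ neighbors $u_i, u_{\delta-k+2},\ldots,u_\delta$ inside the union, while any other excluded $u_j$ has at most $\delta-k$ bad neighbors (namely $v$ together with the $\delta-k-1$ other excluded $u_\ell$'s), and therefore at least $\deg(u_j)-(\delta-k)\geq k$ neighbors lying in $\{u_i\}\cup C$. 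The pairing of $\{v\}$ with $C$ is verified by exactly the same counting, with $v$ moved inside the union. The only place where real care is needed is precisely this last inequality $\deg(u_j)-(\delta-k)\geq k$, which is where the minimum-degree hypothesis $\deg(u_j)\geq\delta$ is used sharply; everything else is bookkeeping.
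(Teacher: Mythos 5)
Your proposal is correct and follows essentially the same route as the paper: the identical partition $\{\{v\},\{u_1\},\ldots,\{u_{\delta-k+1}\},C\}$ built around a minimum-degree vertex, the same degree counts showing each excluded $u_j$ has at most $\delta-k$ neighbours outside the relevant union, and the same appeal to $K_n$ for sharpness. Your explicit handling of the degenerate cases $\delta\le k-1$ via the two-part partition $\{\{v\},V(G)\setminus\{v\}\}$ is a minor (and valid) elaboration of the paper's one-line remark that $C_k(G)\ge 2$.
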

\begin{proof}
The lower bound trivially holds when $\delta(G)<k$, as $C_{k}(G)$ is always greater than or equal to~$2$. So, we may assume that $\delta(G)\geq k$. Let $v\in V(G)$ be a vertex of minimum degree and let $N(v)=\{v_{1},\ldots,v_{\delta(G)}\}$. We set
$S = V(G) \setminus \{v,v_1,\ldots,v_{\delta(G)-k+1}\}$ and
\[
\Omega=\big{\{}\{v\},\{v_{1}\},\ldots,\{v_{\delta(G)-k+1}\},S\big{\}}.
\]

We observe that $\Omega$ is a vertex partition of $G$. Note that $\{v_{i}\}$, for each $i\in[\delta(G)-k+1]$, and $\{v\}$ are not $k$-dominating sets in $G$. Moreover, $v$ has precisely $k-1$ neighbors in $S$. So, $S$ is not a $k$-dominating set in $G$ either. Consider the set $\{v\} \cup S$. Because $|N(v_{i})\cap \{v_{1},\ldots,v_{\delta(G)-k+1}\}|\leq \delta(G)-k$, it follows that $v_{i}$ has at least $k$ neighbors in $\{v\}\cup S$ for each $i\in[\delta(G)-k+1]$. Therefore, $\{v\}$ and $S$ form a $k$-coalition in $G$. Now consider $\{v_{i}\}\cup S$ for any $i\in[\delta(G)-k+1]$. Clearly, $v$ has (precisely) $k$ neighbors in $\{v_{i}\}\cup S$. Now, let $j\in[\delta(G)-k+1]\setminus \{i\}$. Notice that $|N(v_{j})\cap(\{v,v_{1},\ldots,v_{\delta(G)-k+1}\}\setminus \{v_{i}\})|\leq \delta(G)-k$. Therefore, $v_{j}$ has at least $k$ neighbors in $\{v_{i}\}\cup S$. So, $\{v_{i}\}$ is a $k$-coalition partner of $S$ for each $i\in[\delta(G)-k+1]$.

We now infer from the above argument that $\Omega$ is a $k$-coalition partition of $G$. Thus, $C_{k}(G)\geq|\Omega|=\delta(G)-k+3$. Furthermore, the lower bound is sharp for the complete graph $K_{n}$ and integer $k$ with $2\leq k\leq n-1$ due to (\ref{complete}).
\end{proof}

The next result provides us with a natural upper bound on the $k$-coalition number. Despite this fact, the family of graphs for which the upper bound holds with equality is remarkably wide. Recall that the \textit{join} of two graphs $G$ and $H$, written $G\vee H$, is the graph obtained from the disjoint union of $G$ and $H$ by adding the edges $\{gh\mid g\in V(G)\ \mbox{and}\ h\in V(H)\}$.

\begin{theorem}\label{Gen-Char}
For any graph $G$ of order $n\geq3$ and integer $k\in \{3,\ldots,n\}$, $C_{k}(G)\leq n-k+2$. Moreover, equality holds if and only if $n=k$ or $G\cong H\vee K_{n-k+1}$, where $H$ is any graph of order $k-1$.
\end{theorem}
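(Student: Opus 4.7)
The plan is to deduce both the upper bound and the equality characterization from a single \emph{excess-counting} argument, where the excess of a set $V$ is $|V|-1$; for a partition $\Theta=\{V_{1},\ldots,V_{m}\}$ of $V(G)$ the total excess is $\sum_{\ell=1}^{m}(|V_{\ell}|-1)=n-m$.

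For the upper bound, let $\Theta$ be any $k$-coalition partition. Because $k\ge 3$, the remark at the end of Section~1 shows that no $V_{i}$ is a $k$-dominating set of cardinality $k$, so each $V_i$ lies in a coalition pair. Pick any such pair $(V_i,V_j)$: either $V_i\cup V_j=V(G)$, giving $|V_i|+|V_j|=n\ge k$, or some $u\in V(G)\setminus(V_i\cup V_j)$ must contribute $k$ neighbors to the union, so $|V_i\cup V_j|\ge k$. Either way $(|V_i|-1)+(|V_j|-1)\ge k-2$, and adding the non-negative excesses of the remaining sets yields $n-m\ge k-2$, i.e.\ $m\le n-k+2$.

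For the easy direction of the characterization, I verify $C_k(G)\ge n-k+2$ in both stated cases. If $n=k$, the partition $\{\{v\},V(G)\setminus\{v\}\}$ is a $k$-coalition partition since both parts have size $<k$ and their union is $V(G)$. If $G\cong H\vee K_{n-k+1}$, then $\Omega=\{V(H)\}\cup\{\{v\}:v\in V(K_{n-k+1})\}$ has $n-k+2$ sets: $V(H)$ is not $k$-dominating (each outside vertex has only $k-1$ neighbors in $V(H)$), each singleton is too small to $k$-dominate, and each $\{v\}\cup V(H)$ has cardinality $k$ and is $k$-dominating, since every remaining vertex of $K_{n-k+1}$ is adjacent to the whole union.

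For the other direction, assume $n>k$ and $C_k(G)=n-k+2$. Equality in the excess count forces that for \emph{every} coalition pair $(V_i,V_j)\in\Theta$ one has $|V_i|+|V_j|=k$ exactly and every other set in $\Theta$ is a singleton (that pair alone already consumes the total excess budget $k-2$). The crux step is to rule out a coalition pair of two non-singletons: since $m\ge 3$, such a pair would leave some singleton $\{v\}\in\Theta$ whose partner must have size $k-1$, yet the only non-singletons $V_i,V_j$ have sizes at most $k-2$, a contradiction. Hence every coalition pair contains a singleton, every non-singleton $V_\ell$ is partnered by a singleton (forcing $|V_\ell|=k-1$), and the budget $k-2$ admits only one such non-singleton $H^{*}$. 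Thus $\Theta$ consists of $H^{*}$ together with $n-k+1$ singletons, each partnering $H^{*}$. For each singleton $\{v\}$ the set $\{v\}\cup H^{*}$ is $k$-dominating of cardinality exactly $k$, so every vertex outside it must be adjacent to all of $\{v\}\cup H^{*}$; ranging over all singletons shows they induce a clique $K_{n-k+1}$ with complete join to $H^{*}$, while the edges inside $H^{*}$ are unconstrained. Setting $H=G[H^{*}]$ gives $G\cong H\vee K_{n-k+1}$. The only step that is not routine bookkeeping is the elimination of a two-non-singleton coalition pair; everything else follows from unpacking the $k$-domination condition on $k$-element sets.
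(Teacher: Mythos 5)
Your proof is correct and follows essentially the same route as the paper's: the same counting inequality $n\ge k+(|\Theta|-2)$ for the upper bound, and the same analysis of the equality case (exactly one part of size $k-1$ plus $n-k+1$ singletons, each of whose unions with it is a $k$-dominating set of cardinality exactly $k$, which forces the join structure). The ``excess'' reformulation and your explicit elimination of a two-non-singleton coalition pair are only cosmetic repackagings of the paper's argument.
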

\begin{proof}
Let $\Theta=\{A_{1},\ldots,A_{|\Theta|}\}$ be any $C_{k}(G)$-partition. Without loss of generality, we may assume that $A_{1}$ and $A_{2}$ form a $k$-coalition in $G$. Therefore,
\begin{equation}\label{Upp}
n=|A_{1}\cup A_{2}|+\sum_{i=3}^{|\Theta|}|A_{i}|\geq k+|\Theta|-2.
\end{equation}
This leads to the desired upper bound.

Assume that we have equality in the upper bound. If $|\Theta|=2$, then $n=k$ since $C_{k}(G)=n-k+2$. So, we may assume that $|\Theta|\geq3$. This together with the resulting equality in (\ref{Upp}) imply that $|A_{1}\cup A_{2}|=k$ and that $|A_{i}|=1$ for each $i\in \{3,\ldots,|\Theta|\}$. Let $G_{1}=G[A_{1}\cup A_{2}]$ and $G_{2}=G[\cup_{i=3}^{|\Theta|}A_{i}]$. Since $A_{1}\cup A_{2}$ is a $k$-dominating set of cardinality $k$ in $G$, it follows that every vertex in $G_{2}$ is adjacent to all vertices in $G_{1}$.

On the other hand, since $k\geq3$ and because $|A_{i}|=1$ for each $i\in \{3,\ldots,|\Theta|\}$, it follows that every singleton set $A_{i}$, for $i\in \{3,\ldots,|\Theta|\}$, forms a $k$-coalition with either $A_{1}$ or $A_{2}$. Therefore, one of $A_{1}$ and $A_{2}$, say $A_{1}$, has cardinality $k-1$. Hence, every singleton set $A_{i}$ necessarily forms a $k$-coalition with $A_{1}$. With this in mind, because $A_{1}\cup A_{i}$ is a $k$-dominating set in $G$ of cardinality $k$ for each $i\in[|\Theta|]\setminus \{1\}$, it follows that every vertex in $V(G)\setminus(A_{1}\cup A_{i})$ is adjacent to all vertices in $A_{1}\cup A_{i}$. In particular, it happens that $G_{3}=G[V(G_{2})\cup A_{2}]\cong K_{n-k+1}$. Therefore, $G\cong H\vee K_{n-k+1}$, where $H=G[A_{1}]$.

The converse evidently holds if $n=k$. So, let $n>k$. Hence, $G\cong H\vee K_{n-k+1}$, where $H$ is any graph of order $k-1$. It is then clear from the structure of $G$ that every singleton set $\{x\}$, with $x\in V(K_{n-k+1})$, forms a $k$-coalition with $V(H)$ in $G$. Therefore, $\{V(H)\}\cup \big{\{}\{x\}\mid x\in V(K_{n-k+1})\big{\}}$ is a $k$-coalition partition of $G$ of cardinality $n-k+2$. This leads to the equality $C_{k}(G)=n-k+2$.
\end{proof}

We note that the second statement of Theorem \ref{Gen-Char} fails when $k\in \{1,2\}$. For example, it is obvious that $C(G)\leq n$ for each graph $G$ of order $n$. Moreover, it is easy to see that $C_{2}(C_{4})=4$, while $C_{4}$ does not satisfy the necessity part of the second statement in Theorem \ref{Gen-Char}.

We remark that:\vspace{-2mm}
\begin{itemize}
\item[$\bullet$] a complete characterization of all graphs $G$ with equal order and coalition number $C(G)$ is given in the recent paper \cite{YSZ}, and\vspace{-2mm}
\item[$\bullet$] for a graph $G$ of order $n$, $C_{2}(G)=n$ if and only if for every vertex $v\in V(G)$, there exists a vertex $u\in V(G)$ such that $\{u,v\}$ is a $2$-dominating set in $G$. We observe that the later statement can be verified in polynomial time.
\end{itemize}\vspace{-2mm}

Next, we recall the following statement from~\cite{JAB}. ``\textit{Let $G$ be a graph and let $\Theta$ be a $C_{k}(G)$-partition. If $A\in \Theta$, then $A$ forms a $k$-coalition with at most $\Delta(G)-k+2$ sets in $\Theta$.}" Note that this statement is not correct as it stands when $k\geq \Delta(G)+2$. In what follows, we give a revised version of the above-mentioned statement. It will turn out to be useful in several places in this paper.

\begin{lemma}\label{lemma-k-coal}
Let $G$ be a graph on at least two vertices and let $\Theta$ be a $C_{k}(G)$-partition. If $A\in \Theta$, then $A$ forms a $k$-coalition with at most $\max\{1,\Delta(G)-k+2\}$ sets in $\Theta$.
\end{lemma}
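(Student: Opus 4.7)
The plan is to fix $A\in\Theta$ and bound the number $t$ of partners of $A$ in $\Theta$ (those sets forming a $k$-coalition with $A$). Since $\max\{1,\Delta(G)-k+2\}\geq 1$, the case $t=0$ is immediate, so I would assume $t\geq 1$ and denote the partners by $B_1,\ldots,B_t$. Because $A$ forms a $k$-coalition with $B_1$, $A$ itself is not a $k$-dominating set of $G$; hence one can fix a vertex $v\in V(G)\setminus A$ with $d:=|N(v)\cap A|<k$.

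The next step extracts the key constraint from $k$-domination of $v$. For every partner $B_i$, the union $A\cup B_i$ is $k$-dominating and $v\notin A$, so either $v\in B_i$ or $|N(v)\cap B_i|\geq k-d$. Since the $B_i$ are pairwise disjoint subsets of $V(G)\setminus A$, at most one of them contains $v$, and the remaining partners contribute pairwise disjoint subsets of $N(v)\setminus A$, each of size at least $k-d$. Counting gives
\[
t \;\le\; 1 + \left\lfloor \frac{\deg(v)-d}{k-d}\right\rfloor \;\le\; 1 + \left\lfloor \frac{\Delta(G)-d}{k-d}\right\rfloor.
\]

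The remaining task, and the main (purely arithmetic) obstacle, is to show the right-hand side is at most $\max\{1,\Delta(G)-k+2\}$ regardless of the value $d\in\{0,\ldots,k-1\}$. I would split on $\Delta(G)$ versus $k$. If $\Delta(G)\leq k-1$, then $0\leq \Delta(G)-d\leq k-1-d<k-d$, so the floor vanishes and $t\leq 1$, matching the maximum. If $\Delta(G)\geq k$, then after clearing denominators, the desired inequality $(\Delta(G)-d)/(k-d)\leq \Delta(G)-k+1$ reduces to the clean identity
\[
(k-d)(\Delta(G)-k+1)-(\Delta(G)-d)=(\Delta(G)-k)(k-1-d),
\]
whose right-hand side is non-negative since $d\leq k-1$ and $\Delta(G)\geq k$. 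This yields $t\leq \Delta(G)-k+2$, finishing the argument. The case $\Delta(G)<k$ is precisely what the original formulation in~\cite{JAB} overlooked, which is why the $\max$ cannot be omitted.
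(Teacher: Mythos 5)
Your proof is correct and follows essentially the same route as the paper: both arguments fix a witness vertex $v\notin A$ with $|N(v)\cap A|<k$ and exploit the disjointness of the coalition partners inside $N(v)$, with the case of a partner containing $v$ handled separately. The only difference is bookkeeping --- the paper groups one partner with $A$ to extract $k$ neighbors and counts at least one neighbor from each remaining partner, whereas you count at least $k-d$ neighbors from each partner not containing $v$ and then reduce the resulting (in fact slightly stronger) bound $1+\lfloor(\Delta(G)-d)/(k-d)\rfloor$ to $\max\{1,\Delta(G)-k+2\}$ by a correct arithmetic verification.
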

\begin{proof}
If $A\in \Theta$ is a $k$-dominating set in $G$ (it occurs only if $k=|A|=1$), then it forms a $k$-coalition with no set in $\Theta$. So, we may assume that $A$ is not a $k$-dominating set. Therefore, there exists a vertex $v\in V(G)\setminus A$ such that $|N(v)\cap A|<k$. Let $A$ be a $k$-coalition partner of $B\in \Theta$. If $\deg(v)<k$, then $v$ necessarily belongs to $B$ as $A\cup B$ is a $k$-dominating set in $G$. Moreover, $A$ does not form a $k$-coalition with any other set in $\Theta$ as $\deg(v)<k$ and $v\in B$. Therefore, we can assume that $\deg(v)\geq k$. Let $A_{1},\ldots,A_{t}$ be all sets in $\Theta$ that form a $k$-coalition with the set $A$. We have $t\geq1$ as $A$ is not a $k$-dominating set in $G$. Note that if $t=1$, then we are done. So, let $t\geq2$.

Assume first that $v\notin \bigcup_{i=1}^{t}A_{i}$. Since $|N(v)\cap(A\cup A_{i})|\geq k$ for each $i\in[t]$ and because $|N(v)\cap A|<k$, we deduce that
\begin{equation}\label{most1}
\Delta(G)\geq|N(v)|\geq|N(v)\cap(A\cup A_{1})|+\sum_{i=2}^{t}|N(v)\cap A_{i}|\geq k+t-1.
\end{equation}

Now let $v\in \bigcup_{i=1}^{t}A_{i}$. Without loss of generality, we may assume that $v\in A_{t}$ (in such a situation, $N(v)\cap A_{t}$ may be empty). Due to this, an argument analogous to the above leads to
\begin{equation}\label{most2}
\Delta(G)\geq|N(v)|\geq|N(v)\cap(A\cup A_{1})|+\sum_{i=2}^{t}|N(v)\cap A_{i}|\geq k+t-2.
\end{equation}

From (\ref{most1}) and (\ref{most2}), we infer that $A$ is a $k$-coalition partner of at most $\Delta(G)-k+2$ sets in $\Theta$ when $\deg(v)\geq k$. This results in the desired upper bound.
\end{proof}

Before proceeding further, we need the following natural concept. Given a $k$-coalition partition $\Theta= \{V_1, \ldots, V_t\}$ of a graph $G$, the {\em $k$-coalition graph} $C_kG(G,\Theta)$ is the graph whose vertices correspond one-to-one with the sets of $\Theta$, and two vertices $V_i$ and $V_j$ are adjacent in $C_kG(G,\Theta)$ if and only if their corresponding sets $V_i$ and $V_j$ form a $k$-coalition in $G$.

Note that, for various reasons, small values of $k$ (especially $k\in \{1,2\}$) regarding domination-related parameters have attracted more attention from experts in domination theory than large values. One reason is that for large values of $k$, we may have a trivial problem (for example, $C_{k}(G)=2$ for any graph of order $n\geq2$ with $k\geq \Delta(G)+1$). Another reason is that one may obtain stronger results for the small values of $k$ rather than the large ones. In view of this, we turn our attention to the case when $k=2$.

\begin{theorem}\label{thm:general}
For any graph $G$ with $\delta=\delta(G)\geq2$ and $\Delta=\Delta(G)\geq4\big{\lfloor}\frac{\delta}{2}\big{\rfloor}$,
\begin{center}
$C_{2}(G)\leq\Big{(}\Delta-2\left\lfloor\dfrac{\delta}{2}\right\rfloor+1\Big{)}\Big{(}\left\lfloor\dfrac{\delta}{2}\right\rfloor+1\Big{)}+\left\lceil\dfrac{\delta}{2}\right\rceil+1$.
\end{center}
Moreover, the bound is sharp.
\end{theorem}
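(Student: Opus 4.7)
The plan is to fix a vertex $u$ of minimum degree $\delta$ in $G$, set $d = \lfloor \delta/2 \rfloor$, and study an arbitrary $C_2(G)$-partition $\Theta$ through the way its sets meet the neighborhood $N(u)$. Let $A_u$ denote the member of $\Theta$ containing $u$ and partition $\Theta \setminus \{A_u\}$ into $\mathcal{V} = \{B : |N(u) \cap B| \ge 2\}$, $\mathcal{S} = \{B : |N(u) \cap B| = 1\}$, and $\mathcal{Z} = \{B : |N(u) \cap B| = 0\}$. Since $|N(u)| = \delta$, a direct pigeonhole count on how the neighbors of $u$ are distributed across these sets yields $|\mathcal{V}| \le d$ and $2|\mathcal{V}| + |\mathcal{S}| \le \delta$. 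Writing $j = |\mathcal{V}|$, the ``local'' contribution to $|\Theta|$ is thus $1 + j + |\mathcal{S}| \le 1 + j + (\delta - 2j) = 1 + \delta - j$.

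By the remark at the end of Section~1, every set in $\Theta$ is non-2-dominating and hence must have at least one 2-coalition partner. For each $B \in \mathcal{Z}$ the vertex $u$ itself witnesses that $B$ is not 2-dominating, so any partner $C$ of $B$ must make $B \cup C$ 2-dominate $u$; since $|N(u) \cap B| = 0$, this forces either $u \in C$ (i.e., $C = A_u$) or $|N(u) \cap C| \ge 2$ (i.e., $C \in \mathcal{V}$). Therefore the partners of every set in $\mathcal{Z}$ belong to the ``central'' family $\mathcal{P} := \{A_u\} \cup \mathcal{V}$, of cardinality $j + 1 \le d + 1$. Since $\Delta \ge 4d \ge 4$, Lemma~\ref{lemma-k-coal} applied with $k = 2$ gives $\max\{1, \Delta - k + 2\} = \Delta$, so each set of $\mathcal{P}$ has at most $\Delta$ coalition partners.

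The heart of the argument is to sharpen the naive estimate $|\mathcal{Z}| \le (j+1)\Delta$ to the tighter $|\mathcal{Z}| \le (j+1)(\Delta - 2j + 1)$. I would obtain this by retracing the proof technique of Lemma~\ref{lemma-k-coal} with a witness chosen specifically to interact with $N(u)$: for each $C \in \mathcal{P}$ one picks a witness $w_C \notin C$ with $|N(w_C) \cap C| \le 1$ so that the $\Delta$-bound on $|N(w_C)|$ governs the count of $\mathcal{Z}$-partners of $C$, while the $2j$ neighbors of $u$ forced to lie in $\bigcup_{C' \in \mathcal{V}} C'$ are shown to consume $2j - 1$ of the available slots, leaving at most $\Delta - 2j + 1$ slots for members of $\mathcal{Z}$. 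Adding this to the local contribution gives $|\Theta| \le (j+1)(\Delta - 2j + 1) + 1 + \delta - j$, and a routine one-variable calculation in $j \in \{0, 1, \ldots, d\}$ shows (using the hypothesis $\Delta \ge 4d$, which guarantees $g(d) - g(d-1) = \Delta - 4d \ge 0$) that the right-hand side is maximized at $j = d$, producing exactly $(\Delta - 2d + 1)(d+1) + \lceil \delta/2 \rceil + 1$.

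For the sharpness claim I would build an explicit extremal family (most naturally for even $\delta$): take $d+1$ pairwise-interconnected ``hub'' classes that together cover $N[u]$, attach to each hub exactly $\Delta - 2d + 1$ pendant-like sets to populate $\mathcal{Z}$, and fill the remaining $\lceil \delta/2 \rceil$ slots of the partition with singleton pieces of $\mathcal{S}$; verifying that this yields a valid $C_2$-partition should reduce to a direct neighborhood check showing each hub and each pendant set has a legitimate 2-coalition partner. The main obstacle is exactly the refined partner count for $|\mathcal{Z}|$, since the coarse use of Lemma~\ref{lemma-k-coal} overshoots by a factor of roughly $2d$; extracting the ``$-2j + 1$'' correction cleanly will likely require handling separately the subcases according to whether $A_u$ itself contains neighbors of $u$ and whether $\delta$ is even or odd.
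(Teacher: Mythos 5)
Your skeleton coincides with the paper's: the same vertex $u$ of minimum degree, the same trichotomy of $\Theta$ by $|N(u)\cap B|$, the same local count $1+j+|\mathcal{S}|\le 1+\delta-j$, the same observation that every set meeting none of $N[u]$ must be partnered with a set of $\mathcal{P}=\{A_u\}\cup\mathcal{V}$, and the same one-variable optimization in $j$ at the end. The problem is that the inequality you yourself identify as ``the heart of the argument,'' namely $|\mathcal{Z}|\le(j+1)(\Delta-2j+1)$, is left unproved, and the mechanism you sketch for it does not work. The per-set version you assert (each $C\in\mathcal{P}$ has at most $\Delta-2j+1$ partners in $\mathcal{Z}$) is false: if all sets of $\mathcal{Z}$ happen to be partnered with a single $C\in\mathcal{P}$, then $C$ may have as many as $\Delta$ partners by Lemma~\ref{lemma-k-coal}, which exceeds $\Delta-2j+1$ whenever $j\ge1$; only the aggregate bound is true. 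Moreover, the ``slots'' you want to consume live in $N(w_C)$ for a witness $w_C$ of $C$, whereas the $2j$ vertices you invoke are neighbors of $u$; there is no reason these lie in $N(w_C)$, so they cannot be subtracted from $|N(w_C)|\le\Delta$.

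What actually closes this gap in the paper is a minimality device you do not have: one chooses a smallest subfamily $V_{j_1},\dots,V_{j_r}$ of $\mathcal{P}$ that dominates all of $\mathcal{Z}$ in the coalition graph $C_2G(G,\Omega)$. Minimality supplies, for each $k\ne i$, a \emph{private} $\mathcal{Z}$-partner $V_k$ of $V_{j_k}$ that is not a partner of $V_{j_i}$; since $V_{j_k}\cup V_k$ is a $2$-dominating set, it absorbs two neighbors of the witness of $V_{j_i}$, and these pairs are disjoint from the $\mathcal{Z}$-partners of $V_{j_i}$. This yields $|\Omega_{j_i}|\le\Delta-2r+3$, hence $|\mathcal{Z}|\le r(\Delta-2r+3)$, which is then compared with $(j+1)(\Delta-2j+1)$ using $\Delta\ge4\lfloor\delta/2\rfloor$; the case where no set contains two neighbors of $u$ needs a separate argument (the paper's Claim~A). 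Finally, your sharpness paragraph is a gesture rather than a construction: the paper exhibits explicit graphs $G(d)$ (for $\delta=2$) and $G(\delta,\Delta)$ (for even $\delta\ge4$) together with an explicit $2$-coalition partition of the required cardinality, and verifying a ``hub/pendant'' picture of the kind you describe is precisely where the work lies.
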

\begin{proof}
Let $\Omega=\{V_{1},\ldots,V_{|\Omega|}\}$ be a $C_{2}(G)$-partition. Let $u$ be a vertex in $G$ of minimum degree. Without loss of generality, we may assume that $u\in V_{1}$. The following claim will turn out to be useful in the rest of the proof.\vspace{1mm}\\
\textit{Claim A. If $|N_{G}(u)\cap V_{i}|\leq1$ for each $i\in[|\Omega|]$, then $C_{2}(G)\leq \delta+\Delta$.}\vspace{0.75mm}\\
\textit{Proof of Claim A.} Let $\Phi$ be the family of all sets $V_{j}\in \Omega$ such that $N_{G}[u]\cap V_{j}=\emptyset$. We need to differentiate two cases.\vspace{0.5mm}\\
\textit{Case 1.} $|N_{G}(u)\cap V_{1}|=1$. It is then easy to see that each set in $\Phi$ forms a $2$-coalition only with $V_{1}$. On the other hand, $V_{1}$ is a $2$-coalition partner of at most $\Delta$ sets in $\Omega$ by Lemma \ref{lemma-k-coal} with $k=2$. Therefore, $|\Omega|\leq \delta+\Delta$.\vspace{0.5mm}\\
\textit{Case 2.} $N_{G}(u)\cap V_{1}=\emptyset$. Without loss of generality, we may assume that $N_{G}(u)\subseteq \bigcup_{i=2}^{\delta+1}V_{i}$. Note that every set in $\Phi$ necessarily forms a $2$-coalition only with $V_{1}$, and hence $|\Phi|\leq \Delta$ in view of Lemma \ref{lemma-k-coal} with $k=2$. Suppose that $|\Phi|=\Delta$. This implies that $\Phi$ is the family of all $2$-coalition partners of $V_{1}$ due to Lemma~\ref{lemma-k-coal} for $k=2$. We observe that no set in $\{V_{2},\ldots,V_{\delta+1}\}$ forms a $2$-coalition with any set in $\Phi$. Thus we may assume that $V_{2}$ forms a $2$-coalition with $V_{t}$ for some $t\in \{3,\ldots,\delta+1\}$.

Since $V_{1}$ is not a $2$-dominating set in $G$, it follows that there exists a vertex $v\notin V_{1}$ such that $|N_{G}(v)\cap V_{1}|\leq1$. Suppose first that $v\notin \cup_{V_{j}\in \Phi}V_{j}$. This necessarily implies that each $V_{j}\in \Phi$ has a non-empty intersection with $N_{G}(v)$. Then, for any $V_{j}\in \Phi$, we have
\[
|N_{G}(v)|\geq|N_{G}(v)\cap(V_{1}\cup V_{j})|+\sum_{V_{i}\in \Phi\setminus \{V_{j}\}}|N_{G}(v)\cap V_{i}|\geq2+|\Phi|-1=\Delta+1,
\]
a contradiction.  Suppose now that $v\in V_{j}$ for some $V_{j}\in \Phi$. Because $v\notin V_{2}\cup V_{t}$ and since $V_{2}\cup V_{t}$ is a $2$-dominating set in $G$, it follows that $|N_{G}(v)\cap(V_{2}\cup V_{t})|\geq2$. Now, we have
\[
|N_{G}(v)|\geq \sum_{V_{i}\in \Phi\setminus \{V_{j}\}}|N_{G}(v)\cap V_{i}|+|N_{G}(v)\cap(V_{2}\cup V_{t})|\geq|\Phi|-1+2=\Delta+1,
\]
which is impossible. The above argument guarantees that $|\Phi|\leq \Delta-1$. Therefore, $|\Omega|\leq \delta+\Delta$. This completes the proof of Claim A. $(\square)$\vspace{1mm}

\smallskip
With Claim A and the assumption $\Delta\geq4\lfloor \delta/2\rfloor$ in mind, a simple computation implies that
\begin{equation}\label{First}
C_{2}(G)=|\Omega|\leq \delta+\Delta\leq \Big{(}\Delta-2\left\lfloor\dfrac{\delta}{2}\right\rfloor+1\Big{)}\Big{(}\left\lfloor\dfrac{\delta}{2}\right\rfloor+1\Big{)}+\left\lceil\dfrac{\delta}{2}\right\rceil+1.
\end{equation}

From now on, in view of Claim A and the inequality (\ref{First}), we may assume that there exist some sets in $\Omega$ which have at least two vertices from $N_{G}(u)$. With this in mind and renaming the sets if necessary, we let $\Psi=\{V_{1},\ldots,V_{s}\}$, in which $|N_{G}(u)\cap V_{i}|\geq2$ for each $i\in[s]\setminus \{1\}$, and $|N_{G}(u)\cap V_{i}|\leq1$ for each $i>s$. 
It is clear that $s\leq \lfloor \delta/2\rfloor+1$. Let
\[
\Phi=\{V_{j}\in \Omega \mid N_{G}[u]\cap V_{j}=\emptyset\}.
\]

If $\Phi=\emptyset$, then $C_{2}(G)=|\Omega|\leq s+\delta+1-(2s-1)\leq \delta+1$, which is less than or equal to the desired upper bound. So, we may assume that $\Phi\neq \emptyset$. Notice that every set in $\Phi$ necessarily forms a $2$-coalition with $V_{i}$ for some $i\in[s]$. In other words, in the graph $C_{2}G(G,\Omega)$, the vertices $V_{1},\ldots,V_{s}$ dominate all vertices in $\Phi$. In view of this, let $r$ be the smallest number of vertices $V_{j_{1}},\ldots,V_{j_{r}}\in \{V_{1},\ldots,V_{s}\}$ that dominate all sets in $\Phi$. Obviously, $r\in[s]$. For every $i\in[r]$, let $\Omega_{j_{i}}$ be the set of all neighbors of $V_{j_{i}}$ in the graph $C_{2}G(G,\Omega)$ that belong to $\Phi$. By our choice of $r$, we infer that $\Omega_{j_{i}}\nsubseteq \bigcup_{t\in[r]\setminus \{i\}}\Omega_{j_{t}}$ holds for every $i\in [r]$. On the other hand, since $V_{j_{i}}$ is not a $2$-dominating set in $G$, there is a vertex $v\notin V_{j_{i}}$ such that $|N_{G}(v)\cap V_{j_{i}}|\leq1$. Assume that $|N_{G}(v)\cap V_{j_{i}}|=1$. (The possible case when $N_{G}(v)\cap V_{j_{i}}=\emptyset$ uses a similar argument.) Then, at least $|\Omega_{j_{i}}|-1$ sets in $\Omega_{j_{i}}$ have a nonempty intersection with $N_{G}(v)$. (Note that if $v\notin \bigcup_{V_{p}\in \Omega_{j_{i}}}V_{p}$, then each set in $\Omega_{j_{i}}$ intersects $N_{G}(v)$.) On the other hand, there exists a set $V_{k}\in \Omega_{j_{k}}\setminus \bigcup_{t\in[r]\setminus \{k\}}\Omega_{j_{t}}$ for each $k\in[r]\setminus \{i\}$. This implies that $|N_{G}(v)\cap(V_{j_{k}}\cup V_{k})|\geq2$ for at least $r-2$ sets $V_{j_{k}}\in \{V_{j_{1}},\ldots,V_{j_{r}}\}\setminus \{V_{j_{i}}\}$. (Notice that if $v\notin \bigcup_{k\in[r]\setminus \{i\}}(V_{j_{k}}\cup V_{k})$, then $|N_{G}(v)\cap(V_{j_{k}}\cup V_{k})|\geq2$ for each such set $V_{j_{k}}$.)

If $v\in \bigcup_{V_{t}\in \Omega_{j_{i}}}V_{t}$, then $|N_{G}(v)\cap(V_{j_{k}}\cup V_{k})|\geq2$ for each $k\in[r]\setminus \{i\}$. Therefore, $\Delta\geq|N_{G}(v)|\geq1+|\Omega_{j_{i}}|-1+2(r-1)$. If $v\notin \bigcup_{V_{t}\in \Omega_{j_{i}}}V_{t}$, then $\Delta\geq|N_{G}(v)|\geq1+|\Omega_{j_{i}}|+2(r-2)$. In either case, we infer for each $i\in[r]$ that
\begin{equation}\label{Omegi}
|\Omega_{j_{i}}|\leq \Delta-2r+3.
\end{equation}

Since $\{V_{j_{1}},\ldots,V_{j_{r}}\}$ dominates $\Phi$ in the graph $C_{2}G(G,\Omega)$, we deduce that
\begin{equation}\label{Omeg}
C_{2}(G)=|\Omega|\leq s+\sum_{i=1}^{r}|\Omega_{j_{i}}|+\delta+1-(2s-1)=\sum_{i=1}^{r}|\Omega_{j_{i}}|+\delta-s+2.
\end{equation}

Together (\ref{Omegi}) and (\ref{Omeg}) imply that
\begin{equation}\label{Omeg2}
C_{2}(G)\leq r(\Delta-2r+3)+\delta-s+2\leq r(\Delta-2r+3)+\delta-r+2=f(r).
\end{equation}

If $r=\lfloor \delta/2\rfloor+1$, then we get the desired upper bound. On the other hand, since $\Delta\geq4\lfloor \delta/2\rfloor$, it follows that $f$ is an increasing function on $[1,\lfloor \delta/2\rfloor]$. Thus, if $r\leq \lfloor \delta/2\rfloor$, then
\begin{align*}
C_{2}(G) & \leq f(r)\leq f\Big{(}\left\lfloor \frac{\delta}{2}\right\rfloor\Big{)}=f\Big{(}\left\lfloor \frac{\delta}{2}\right\rfloor+1\Big{)}+4\left\lfloor \frac{\delta}{2}\right\rfloor-\Delta\\
&\leq f\Big{(}\left\lfloor \frac{\delta}{2}\right\rfloor+1\Big{)}=\Big{(}\Delta-2\left\lfloor\frac{\delta}{2}\right\rfloor+1\Big{)}\Big{(}\left\lfloor\frac{\delta}{2}\right\rfloor+1\Big{)}+\left\lceil\frac{\delta}{2}\right\rceil+1,
\end{align*}
as desired.

To see that the upper bound is sharp, we introduce the graphs $G(d)$, where $d\ge 4$, as follows. Take the disjoint union of $3d-1$ copies of the cycle $C_4$ and two copies of the star $K_{1,d}$. Let $$A_1,\ldots,A_d,B_2,\ldots,B_d,C_1,\ldots,C_d$$
be the copies of $C_4$, where the vertices of $A_i$ form the bipartition $\big{\{}\{a_1^i,a_2^i\},\{a_3^i,a_4^i\}\big{\}}$, the vertices of $B_i$ form the bipartition $\big{\{}\{b_1^i,b_2^i\},\{b_3^i,b_4^i\}\big{\}}$, and the vertices of $C_i$ form the bipartition $\big{\{}\{c_1^i,c_2^i\},\{c_3^i,c_4^i\}\big{\}}$.
The two stars $K_{1,d}$ have vertices $x$ and $y$ as their centers, and $X=\{x_1,\ldots,x_d\}$ and $Y=\{y_1,\ldots,y_d\}$ are their sets of leaves, respectively. The graph $G(d)$ is obtained from the described disjoint union by connecting $x_i$ with $a_4^i$ for each $i\in[d]$, connecting $x_i$ with $b_1^i$ and $b_2^i$ for each $i\in[d]\setminus\{1\}$, connecting $y_i$ with $b_3^i$ and $b_4^i$ for each $i\in[d]\setminus\{1\}$, and connecting $y_i$ with $c_1^i$ for each $i\in[d]$. (See Figure \ref{Fig1}.)
Note that all vertices in $\bigcup_{i=1}^d{\{a_1^i,a_2^i,a_3^i,c_2^i,c_3^i,c_4^i\}}\bigcup\{x_1,y_1\}$ have degree $2$ in $G(d)$. The vertices in $\bigcup_{i=1}^d{\{a_4^i,c_1^i\}}\bigcup(\bigcup_{i=2}^d{\{b_1^i,b_2^i,b_3^i,b_4^i\}})$ have degree $3$ in $G(d)$, while the vertices in $(X\cup Y)\setminus\{x_1,y_1\}$ have degree $4$ in $G(d)$.
Finally, $x$ and $y$ have degree $d$ in $G(d)$. Thus, $\delta=\delta\big{(}G(d)\big{)}=2$ and $\Delta=\Delta\big{(}G(d)\big{)}=d$. Since $\Delta=d\ge 4=2\delta$, the condition of the theorem is satisfied.

Now, let us present a partition $\Omega=\{V_1,\ldots,V_{|\Omega|}\}$ of $V\big{(}G(d)\big{)}$, where $|\Omega|=2d$. Let
\[
V_1=\{a_1^1,\ldots,a_1^d,a_2^1,\ldots,a_2^d,b_1^2,\ldots,b_1^d,b_2^2,\ldots,b_2^d, c_1^1,\ldots,c_1^d,c_2^1,\ldots,c_2^d,x_1,y\},
\]
\[
V_{d+1}=\{a_3^1,\ldots,a_3^d,a_4^1,\ldots,a_4^d,b_3^2,\ldots,b_3^d,b_4^2,\ldots,b_4^d, c_3^1,\ldots,c_3^d,c_4^1,\ldots,c_4^d,y_1,x\},
\]
and $V_i=\{x_i\}$, $V_{d+i}=\{y_i\}$ for all $i\in\{2,\ldots,d\}$. Note that $V_1$ is not a 2-dominating set in $G(d)$, because $x\in V_{d+1}$ has only one neighbor in $V_1$, namely $x_1$. Similarly, $V_{d+1}$ is not a 2-dominating set in $G(d)$, since $y\in V_{1}$ has only one neighbor in $V_{d+1}$, namely $y_1$. For the remaining sets $V_i$ it is easy to verify that they are not $2$-dominating sets in $G(d)$ (say, by considering the neighborhood of $a_1^1$). Now, note that $V_1$ forms a 2-coalition with each of the sets $V_2,\ldots,V_d$, while $V_{d+1}$ forms a 2-coalition with each of the sets $V_{d+2},\ldots,V_{2d}$. Therefore, $G(d)$ has a 2-coalition partition of cardinality $2d$. Since
\[
\Big{(}\Delta-2\left\lfloor\dfrac{\delta}{2}\right\rfloor+1\Big{)}\Big{(}\left\lfloor\dfrac{\delta}{2}\right\rfloor+1\Big{)} + \left\lceil\dfrac{\delta}{2}\right\rceil+1 
= (d-2+1)(1+1)+1+1=2d
\]
we infer that $G(d)$ attains the upper bound of the theorem. This completes the proof.
\end{proof}

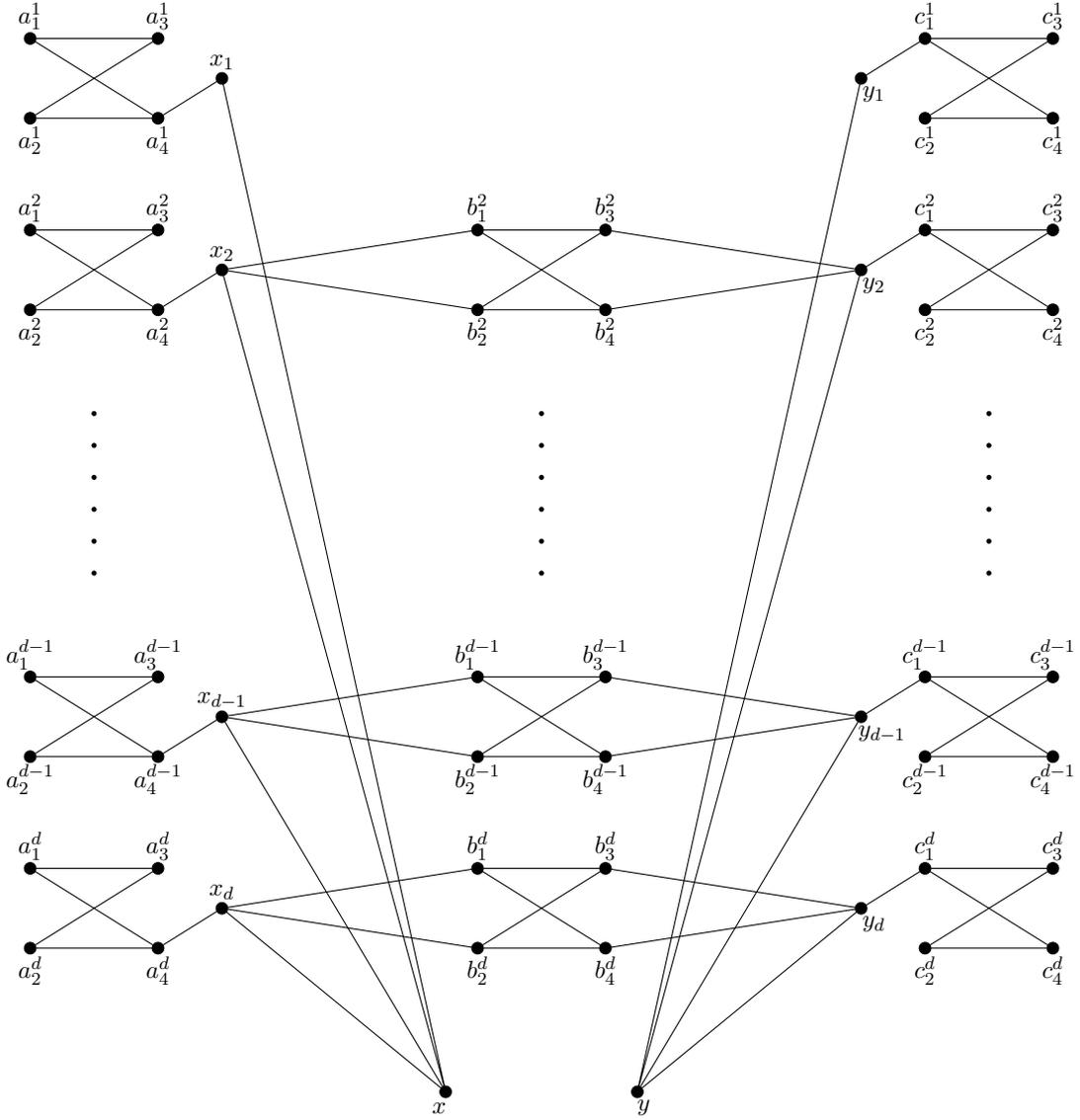
\begin{figure}[ht!]
\centering
\begin{tikzpicture}[scale=0.43, transform shape]
\node [draw, shape=circle, fill=black] (a_{1}^{1}) at (-16,0) {};
\node [draw, shape=circle, fill=black] (a_{3}^{1}) at (-12,0) {};
\node [draw, shape=circle, fill=black] (a_{2}^{1}) at (-16,-2.5) {};
\node [draw, shape=circle, fill=black] (a_{4}^{1}) at (-12,-2.5) {};
\node [draw, shape=circle, fill=black] (x_{1}) at (-10,-1.25) {};
\draw (a_{1}^{1})--(a_{3}^{1})--(a_{2}^{1})--(a_{4}^{1})--(a_{1}^{1});
\draw (x_{1})--(a_{4}^{1});

\node [scale=2] at (-16,0.7) {$a_{1}^{1}$};
\node [scale=2] at (-12,0.7) {$a_{3}^{1}$};
\node [scale=2] at (-16,-3.2) {$a_{2}^{1}$};
\node [scale=2] at (-12,-3.2) {$a_{4}^{1}$};
\node [scale=2] at (-10,-0.75) {$x_{1}$};
\node [scale=2] at (-10,-6.75) {$x_{2}$};
\node [scale=2] at (-10,-20.75) {$x_{d-1}$};
\node [scale=2] at (-10,-26.75) {$x_{d}$};

\node [draw, shape=circle, fill=black] (a_{1}^{2}) at (-16,-6) {};
\node [draw, shape=circle, fill=black] (a_{3}^{2}) at (-12,-6) {};
\node [draw, shape=circle, fill=black] (a_{2}^{2}) at (-16,-8.5) {};
\node [draw, shape=circle, fill=black] (a_{4}^{2}) at (-12,-8.5) {};
\node [draw, shape=circle, fill=black] (x_{2}) at (-10,-7.25) {};
\draw (a_{1}^{2})--(a_{3}^{2})--(a_{2}^{2})--(a_{4}^{2})--(a_{1}^{2});
\draw (x_{2})--(a_{4}^{2});
\node [scale=2] at (-16,-5.3) {$a_{1}^{2}$};
\node [scale=2] at (-12,-5.3) {$a_{3}^{2}$};
\node [scale=2] at (-16,-9.2) {$a_{2}^{2}$};
\node [scale=2] at (-12,-9.2) {$a_{4}^{2}$};

\node [scale=3.25] at (-14,-11.75) {\Large $.$};
\node [scale=3.25] at (-14,-12.75) {\Large $.$};
\node [scale=3.25] at (-14,-13.75) {\Large $.$};
\node [scale=3.25] at (-14,-14.75) {\Large $.$};
\node [scale=3.25] at (-14,-15.75) {\Large $.$};
\node [scale=3.25] at (-14,-16.75) {\Large $.$};

\node [draw, shape=circle, fill=black] (a_{1}^{d-1}) at (-16,-20) {};
\node [draw, shape=circle, fill=black] (a_{3}^{d-1}) at (-12,-20) {};
\node [draw, shape=circle, fill=black] (a_{2}^{d-1}) at (-16,-22.5) {};
\node [draw, shape=circle, fill=black] (a_{4}^{d-1}) at (-12,-22.5) {};
\node [draw, shape=circle, fill=black] (x_{d-1}) at (-10,-21.25) {};
\draw (a_{1}^{d-1})--(a_{3}^{d-1})--(a_{2}^{d-1})--(a_{4}^{d-1})--(a_{1}^{d-1});
\draw (x_{d-1})--(a_{4}^{d-1});
\node [scale=2] at (-16,-19.3) {$a_{1}^{d-1}$};
\node [scale=2] at (-12,-19.3) {$a_{3}^{d-1}$};
\node [scale=2] at (-16,-23.2) {$a_{2}^{d-1}$};
\node [scale=2] at (-12,-23.2) {$a_{4}^{d-1}$};

\node [draw, shape=circle, fill=black] (a_{1}^{d}) at (-16,-26) {};
\node [draw, shape=circle, fill=black] (a_{3}^{d}) at (-12,-26) {};
\node [draw, shape=circle, fill=black] (a_{2}^{d}) at (-16,-28.5) {};
\node [draw, shape=circle, fill=black] (a_{4}^{d}) at (-12,-28.5) {};
\node [draw, shape=circle, fill=black] (x_{d}) at (-10,-27.25) {};
\draw (a_{1}^{d})--(a_{3}^{d})--(a_{2}^{d})--(a_{4}^{d})--(a_{1}^{d});
\draw (x_{d})--(a_{4}^{d});
\node [scale=2] at (-16,-25.3) {$a_{1}^{d}$};
\node [scale=2] at (-12,-25.3) {$a_{3}^{d}$};
\node [scale=2] at (-16,-29.2) {$a_{2}^{d}$};
\node [scale=2] at (-12,-29.2) {$a_{4}^{d}$};


\node [draw, shape=circle, fill=black] (b_{1}^{2}) at (-2,-6) {};
\node [draw, shape=circle, fill=black] (b_{3}^{2}) at (2,-6) {};
\node [draw, shape=circle, fill=black] (b_{2}^{2}) at (-2,-8.5) {};
\node [draw, shape=circle, fill=black] (b_{4}^{2}) at (2,-8.5) {};
\draw (b_{1}^{2})--(b_{3}^{2})--(b_{2}^{2})--(b_{4}^{2})--(b_{1}^{2});
\draw (b_{1}^{2})--(x_{2})--(b_{2}^{2});
\node [scale=2] at (-2,-5.3) {$b_{1}^{2}$};
\node [scale=2] at (2,-5.3) {$b_{3}^{2}$};
\node [scale=2] at (-2,-9.2) {$b_{2}^{2}$};
\node [scale=2] at (2,-9.2) {$b_{4}^{2}$};

\node [scale=3.25] at (0,-11.75) {\Large $.$};
\node [scale=3.25] at (0,-12.75) {\Large $.$};
\node [scale=3.25] at (0,-13.75) {\Large $.$};
\node [scale=3.25] at (0,-14.75) {\Large $.$};
\node [scale=3.25] at (0,-15.75) {\Large $.$};
\node [scale=3.25] at (0,-16.75) {\Large $.$};

\node [draw, shape=circle, fill=black] (b_{1}^{d-1}) at (-2,-20) {};
\node [draw, shape=circle, fill=black] (b_{3}^{d-1}) at (2,-20) {};
\node [draw, shape=circle, fill=black] (b_{2}^{d-1}) at (-2,-22.5) {};
\node [draw, shape=circle, fill=black] (b_{4}^{d-1}) at (2,-22.5) {};
\draw (b_{1}^{d-1})--(b_{3}^{d-1})--(b_{2}^{d-1})--(b_{4}^{d-1})--(b_{1}^{d-1});
\draw (b_{1}^{d-1})--(x_{d-1})--(b_{2}^{d-1});
\node [scale=2] at (-2,-19.3) {$b_{1}^{d-1}$};
\node [scale=2] at (2,-19.3) {$b_{3}^{d-1}$};
\node [scale=2] at (-2,-23.2) {$b_{2}^{d-1}$};
\node [scale=2] at (2,-23.2) {$b_{4}^{d-1}$};

\node [draw, shape=circle, fill=black] (b_{1}^{d}) at (-2,-26) {};
\node [draw, shape=circle, fill=black] (b_{3}^{d}) at (2,-26) {};
\node [draw, shape=circle, fill=black] (b_{2}^{d}) at (-2,-28.5) {};
\node [draw, shape=circle, fill=black] (b_{4}^{d}) at (2,-28.5) {};
\draw (b_{1}^{d})--(b_{3}^{d})--(b_{2}^{d})--(b_{4}^{d})--(b_{1}^{d});
\draw (b_{1}^{d})--(x_{d})--(b_{2}^{d});
\node [scale=2] at (-2,-25.3) {$b_{1}^{d}$};
\node [scale=2] at (2,-25.3) {$b_{3}^{d}$};
\node [scale=2] at (-2,-29.2) {$b_{2}^{d}$};
\node [scale=2] at (2,-29.2) {$b_{4}^{d}$};


\node [draw, shape=circle, fill=black] (c_{1}^{1}) at (12,0) {};
\node [draw, shape=circle, fill=black] (c_{3}^{1}) at (16,0) {};
\node [draw, shape=circle, fill=black] (c_{2}^{1}) at (12,-2.5) {};
\node [draw, shape=circle, fill=black] (c_{4}^{1}) at (16,-2.5) {};
\node [draw, shape=circle, fill=black] (y_{1}) at (10,-1.25) {};
\draw (c_{1}^{1})--(c_{3}^{1})--(c_{2}^{1})--(c_{4}^{1})--(c_{1}^{1});
\draw (y_{1})--(c_{1}^{1});
\node [scale=2] at (12,0.7) {$c_{1}^{1}$};
\node [scale=2] at (16,0.7) {$c_{3}^{1}$};
\node [scale=2] at (12,-3.2) {$c_{2}^{1}$};
\node [scale=2] at (16,-3.2) {$c_{4}^{1}$};
\node [scale=2] at (10.4,-1.75) {$y_{1}$};
\node [scale=2] at (10.4,-7.75) {$y_{2}$};
\node [scale=2] at (10.65,-21.79) {$y_{d-1}$};
\node [scale=2] at (10.4,-27.75) {$y_{d}$};

\node [draw, shape=circle, fill=black] (c_{1}^{2}) at (12,-6) {};
\node [draw, shape=circle, fill=black] (c_{3}^{2}) at (16,-6) {};
\node [draw, shape=circle, fill=black] (c_{2}^{2}) at (12,-8.5) {};
\node [draw, shape=circle, fill=black] (c_{4}^{2}) at (16,-8.5) {};
\node [draw, shape=circle, fill=black] (y_{2}) at (10,-7.25) {};
\draw (c_{1}^{2})--(c_{3}^{2})--(c_{2}^{2})--(c_{4}^{2})--(c_{1}^{2});
\draw (y_{2})--(c_{1}^{2});
\draw (b_{3}^{2})--(y_{2})--(b_{4}^{2});
\node [scale=2] at (12,-5.3) {$c_{1}^{2}$};
\node [scale=2] at (16,-5.3) {$c_{3}^{2}$};
\node [scale=2] at (12,-9.2) {$c_{2}^{2}$};
\node [scale=2] at (16,-9.2) {$c_{4}^{2}$};

\node [scale=3.25] at (14,-11.75) {\Large $.$};
\node [scale=3.25] at (14,-12.75) {\Large $.$};
\node [scale=3.25] at (14,-13.75) {\Large $.$};
\node [scale=3.25] at (14,-14.75) {\Large $.$};
\node [scale=3.25] at (14,-15.75) {\Large $.$};
\node [scale=3.25] at (14,-16.75) {\Large $.$};

\node [draw, shape=circle, fill=black] (c_{1}^{d-1}) at (12,-20) {};
\node [draw, shape=circle, fill=black] (c_{3}^{d-1}) at (16,-20) {};
\node [draw, shape=circle, fill=black] (c_{2}^{d-1}) at (12,-22.5) {};
\node [draw, shape=circle, fill=black] (c_{4}^{d-1}) at (16,-22.5) {};
\node [draw, shape=circle, fill=black] (y_{d-1}) at (10,-21.25) {};
\draw (c_{1}^{d-1})--(c_{3}^{d-1})--(c_{2}^{d-1})--(c_{4}^{d-1})--(c_{1}^{d-1});
\draw (y_{d-1})--(c_{1}^{d-1});
\draw (b_{3}^{d-1})--(y_{d-1})--(b_{4}^{d-1});
\node [scale=2] at (12,-19.3) {$c_{1}^{d-1}$};
\node [scale=2] at (16,-19.3) {$c_{3}^{d-1}$};
\node [scale=2] at (12,-23.2) {$c_{2}^{d-1}$};
\node [scale=2] at (16,-23.2) {$c_{4}^{d-1}$};

\node [draw, shape=circle, fill=black] (c_{1}^{d}) at (12,-26) {};
\node [draw, shape=circle, fill=black] (c_{3}^{d}) at (16,-26) {};
\node [draw, shape=circle, fill=black] (c_{2}^{d}) at (12,-28.5) {};
\node [draw, shape=circle, fill=black] (c_{4}^{d}) at (16,-28.5) {};
\node [draw, shape=circle, fill=black] (y_{d}) at (10,-27.25) {};
\draw (c_{1}^{d})--(c_{3}^{d})--(c_{2}^{d})--(c_{4}^{d})--(c_{1}^{d});
\draw (y_{d})--(c_{1}^{d});
\draw (b_{3}^{d})--(y_{d})--(b_{4}^{d});
\node [scale=2] at (12,-25.3) {$c_{1}^{d}$};
\node [scale=2] at (16,-25.3) {$c_{3}^{d}$};
\node [scale=2] at (12,-29.2) {$c_{2}^{d}$};
\node [scale=2] at (16,-29.2) {$c_{4}^{d}$};


\node [draw, shape=circle, fill=black] (x) at (-3,-33) {};
\draw (x_{d})--(x)--(x_{d-1});
\draw (x_{2})--(x)--(x_{1});
\node [scale=2] at (-3.2,-33.5) {$x$};


\node [draw, shape=circle, fill=black] (y) at (3,-33) {};
\draw (y_{d})--(y)--(y_{d-1});
\draw (y_{2})--(y)--(y_{1});
\node [scale=2] at (3.2,-33.5) {$y$};

\end{tikzpicture}
\caption{{\small The graph $G(d)$ with $C_{2}\big{(}G(d)\big{)}=2d$.}}
\label{Fig1}
\end{figure}

Using a similar idea as in the above proof, for each even integer $\delta\geq2$, one can construct an infinite family of graphs, with minimum degree equal to $\delta$, attaining the upper bound of Theorem~\ref{thm:general}. To do so, let $H(r)$ denote the {\em cocktail-party graph}, which is the graph obtained from the complete graph $K_{2r}$ by removing edges of a perfect matching. Note that $\omega\big{(}H(r)\big{)}=r$, in which $\omega$ stands for the clique number, and that there exist two (vertex and edge) disjoint complete subgraphs of $H(r)$ of order $r$. If $Q$ is a graph isomorphic to $H(r)$, then by $Q'$ and $Q''$ we will denote the two disjoint complete subgraphs of $Q$, each of order $r$.

Let $\delta\ge 4$ be an even integer, $r=\delta/2+1$, and let $\Delta\ge 2\delta$. Take the disjoint union of $r$ stars $K_{1,\Delta}$, with centers $w_1,\ldots,w_r$, and let $W_1,\ldots,W_r$ be the sets of their leaves, respectively. (Note that there are $\Delta\cdot r$ vertices in $W_1\cup\cdots\cup W_r$.) For each vertex $q$ in $W_1\cup\cdots\cup W_r$, take two disjoint copies of the graph $Q=H(r)$, denoted by $Q_1$ and $Q_2$, and connect $q$ to the vertices of $V(Q_1')\cup V(Q_2')$. In this way, the degree of $q$ is $2r+1$, and the degrees of vertices in $V(Q_1)\cup V(Q_2)$ are in $\{2r-2,2r-1\}$. On the other hand, the degree of vertex $w_i$, where $i\in [r]$, is $\Delta$. Make the graph connected by adding $r-1$ edges connecting vertices of the copies of $H(r)$ that correspond to the leaves in the sets $W_1,\ldots, W_r$. Let the resulting graph be denoted by $G(\delta,\Delta)$. Note that $\delta\big{(}G(\delta,\Delta)\big{)}=\delta=2r-2$ and $\Delta\big{(}G(\delta,\Delta)\big{)}=\Delta$.

Now, let us present a partition $\Omega=\{V_1,\ldots,V_{|\Omega|}\}$ of $V\big{(}G(\delta,\Delta)\big{)}$ that we claim it is a $2$-coalition partition. For each copy $Q$ of the graph $H(r)$, let the vertices in $V(Q')\cup V(Q'')$ be distributed to the sets $V_1,\ldots, V_r$, in such a way that the endvertices of the $i$th edge of the removed perfect matching belong to $V_{i}$ for each $i\in[r]$. Let $w_i\in V_i$ for $i\in [r]$. Now, vertices in the sets $W_i$ are distributed as follows. Let one vertex of $W_i$ belong to $V_{i+1}$ and let two vertices of $W_i$ belong to $V_j$ for all $j\in [r]\setminus\{i,i+1\}$, where we identify $V_{r+1}$ with $V_{1}$. The remaining $\Delta-2r+3$ vertices of $W_i$ are distributed to the sets $V_{r+(i-1)(\Delta-2r+3)+j}$, for all $j\in[\Delta-2r+3]$ and $i\in[r]$. One can verify that no set $V_i$ is a $2$-dominating set. Moreover, for all $i\in[r]$, the set $V_{i+1}$ forms a $2$-coalition with the sets $V_{r+(i-1)(\Delta-2r+3)+1},V_{r+(i-1)(\Delta-2r+3)+2},\ldots, V_{r+i(\Delta-2r+3)}$, where we identify $V_{r+1}$ with $V_{1}$. Hence, $\Omega$ is a $2$-coalition partition of the graph $G(\delta,\Delta)$, and its cardinality is 
\[
|\Omega|=r(\Delta-2r+3)+r= \left( \frac{\delta}{2}+1 \right)\left( \Delta-2\cdot\frac{\delta}{2}+1 \right) + \frac{\delta}{2}+1,
\]
which coincides with the upper bound in Theorem~\ref{thm:general}. The following observation readily follows.

\begin{observation}
For every even positive integer $\delta$ and every integer $\Delta\ge 2\delta$, there exist graphs $G$, with $\delta(G)=\delta$ and $\Delta(G)=\Delta$, such that 
\[
C_{2}(G) = \Big{(}\Delta-2\left\lfloor\dfrac{\delta}{2}\right\rfloor+1\Big{)}\Big{(}\left\lfloor\dfrac{\delta}{2}\right\rfloor+1\Big{)} + \left\lceil\dfrac{\delta}{2}\right\rceil+1.
\]
\end{observation}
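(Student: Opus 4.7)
The observation is an immediate consequence of the construction of $G(\delta,\Delta)$ and the partition $\Omega$ described just above it. My plan is to verify the three assertions implicit in the observation: (i) $\delta(G(\delta,\Delta))=\delta$ and $\Delta(G(\delta,\Delta))=\Delta$; (ii) $\Omega$ is a $2$-coalition partition; and (iii) $|\Omega|$ equals the right-hand side of the claimed formula. Throughout I set $r=\delta/2+1$.

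For (i), I would observe that each cocktail-party vertex has degree $2r-2$ inside its copy of $H(r)$. Vertices in an ``upper half'' $V(Q')$ gain exactly one neighbor via the leaf they are attached to, and at most a few vertices acquire one additional neighbor from one of the $r-1$ connecting edges. Hence any vertex in some $V(Q'')$ that is not touched by a connecting edge has degree exactly $2r-2=\delta$, and such vertices exist. Each center $w_i$ has degree $\Delta$ by construction, while each leaf $q\in W_i$ has degree $1+2r=2r+1$, which does not exceed $\Delta$ since $\Delta\ge 2\delta=4r-4\ge 2r+1$ (using $\delta\ge 4$). Thus $\delta(G(\delta,\Delta))=\delta$ and $\Delta(G(\delta,\Delta))=\Delta$, as required.

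For (ii), I would first show that no set in $\Omega$ is a $2$-dominating set. For $V_i$ with $i\in[r]$, the center $w_{i-1}\in V_{i-1}$ (indices read modulo $r$) has only one neighbor in $V_i$, namely the unique leaf of $W_{i-1}$ placed in $V_i$ by the assignment rule; for any singleton $\{q\}\in \Omega$, non-domination is trivial by choosing a vertex non-adjacent to $q$. Then I would verify the intended $2$-coalitions: for each $i\in[r]$ and $j\in[\Delta-2r+3]$, letting $q$ be the leaf of $W_i$ corresponding to the singleton $V_{r+(i-1)(\Delta-2r+3)+j}$, I claim that $V_{i+1}\cup\{q\}$ $2$-dominates $G(\delta,\Delta)$. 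The critical vertex is $w_i\in V_i$: it has exactly one neighbor (a leaf of $W_i$) in $V_{i+1}$ and acquires the additional neighbor $q$. Every other center $w_t$, $t\notin\{i,i+1\}$, has two neighbors in $V_{i+1}$ coming from the two leaves of $W_t$ placed there. Every leaf $p$ outside the union receives at least two neighbors from the matching pair of cocktail-party vertices that lies in $V_{i+1}$ within each of $p$'s two copies. Finally, every cocktail-party vertex $v$ outside the union is adjacent, inside its own copy of $H(r)$, to both vertices of the matching pair assigned to $V_{i+1}$, yielding two neighbors in the union.

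For (iii), the partition consists of the $r$ ``full'' sets $V_1,\ldots,V_r$ together with $r(\Delta-2r+3)$ singletons (one per leftover leaf), so
\[
|\Omega|=r+r(\Delta-2r+3)=r(\Delta-2r+4).
\]
Substituting $r=\delta/2+1$ gives $\bigl(\delta/2+1\bigr)\bigl(\Delta-\delta+2\bigr)=\bigl(\Delta-\delta+1\bigr)\bigl(\delta/2+1\bigr)+\bigl(\delta/2+1\bigr)$, which coincides with the upper bound in Theorem~\ref{thm:general} for even $\delta$. The main obstacle I expect is the bookkeeping in step (ii): for every vertex type (center, leaf, upper-half clique vertex, lower-half clique vertex) and every pair $(V_{i+1},\{q\})$, one must confirm that the union supplies at least two neighbors, and the leaf distribution $1+2(r-2)+(\Delta-2r+3)=\Delta$ together with the matching-pair rule inside each $H(r)$ is calibrated so that this happens tightly, with essentially no slack, which is exactly what is needed for the bound to be attained.
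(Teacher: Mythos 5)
Your argument follows the paper's intended justification of the observation exactly: exhibit the partition $\Omega$ of $G(\delta,\Delta)$ to get the lower bound $C_2(G)\ge r(\Delta-2r+4)$ with $r=\delta/2+1$, and invoke Theorem~\ref{thm:general} (applicable because $\Delta\ge 2\delta=4\lfloor\delta/2\rfloor$) for the matching upper bound, hence equality. The degree bookkeeping, the choice of $w_{i-1}$ as the witness that $V_i$ is not $2$-dominating, the case analysis showing $V_{i+1}\cup\{q\}$ is $2$-dominating, and the cardinality computation are all correct. (One phrasing slip: a leaf $p$ is adjacent only to the $V(Q')$-endpoint of the matching pair of $V_{i+1}$ in each of its two attached copies, not to both endpoints; the count of two neighbors comes from $p$ having two copies, not from one matching pair. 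The conclusion is unaffected.)

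The genuine gap is the case $\delta=2$. The observation is asserted for \emph{every} even positive integer $\delta$, but the construction $G(\delta,\Delta)$ --- and your own degree estimate, which explicitly uses $\delta\ge4$ to guarantee $2r+1\le\Delta$ --- only works for $\delta\ge4$. For $\delta=2$ one has $r=2$ and each leaf $q$ would have degree $2r+1=5$, which exceeds $\Delta$ when $\Delta=2\delta=4$, so the construction does not even produce a graph with maximum degree $\Delta$ in that range. The paper handles $\delta=2$ by a different family, namely the graphs $G(d)$, $d\ge4$, built in the sharpness part of the proof of Theorem~\ref{thm:general}, which satisfy $\delta(G(d))=2$, $\Delta(G(d))=d$ and $C_2(G(d))=2d$, in agreement with the formula. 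Your proof needs to either point to that family or give a separate construction for $\delta=2$.
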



\section{Trees}

The authors of \cite{bhp} characterized the trees $T$ of order $n$ for which equality holds in the trivial upper bound $C(T)\leq n$ as follows.

\begin{theorem}\emph{(\cite{bhp})}
If $T$ is a tree of order $n$, then $C(T)=n$ if and only if $T$ is a path of order at most~$4$.
\end{theorem}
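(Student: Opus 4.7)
The plan is to handle the two directions separately; almost all the work is in the forward implication. First observe that any partition of $V(T)$ into $n$ nonempty blocks must be the all-singletons partition $\Pi = \{\{v\} : v \in V(T)\}$, so the hypothesis $C(T) = n$ is equivalent to $\Pi$ being a coalition partition. Unpacking the definition, this means every vertex $v$ either satisfies that $\{v\}$ is a dominating set of $T$ (equivalently, $v$ is a universal vertex) or admits some vertex $w$ for which $\{v,w\}$ is dominating while neither $\{v\}$ nor $\{w\}$ is dominating.

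The key structural step is a diameter bound: I claim $T$ has diameter at most $3$. Let $v_0 v_1 \cdots v_d$ be a longest path in $T$; its endpoints $v_0, v_d$ are necessarily leaves. If $d \ge 4$, then $v_2$ is non-adjacent to both $v_0$ and $v_d$, so $v_2$ is not universal and therefore needs a partner $w$ with $\{v_2, w\}$ dominating. Dominating the leaf $v_0$ forces $w \in N[v_0] = \{v_0, v_1\}$, and dominating the leaf $v_d$ forces $w \in N[v_d] = \{v_{d-1}, v_d\}$; these two sets are disjoint, yielding a contradiction.

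With the diameter bound in hand, I would enumerate the trees of diameter at most $3$ and rule out all non-path cases. Diameters $0$ and $1$ give $K_1$ and $P_2$; diameter $2$ gives $T = K_{1, n-1}$, which is $P_3$ when $n = 3$, and for $n \ge 4$ the center $c$ is the only vertex with $\{c\}$ dominating, so a leaf $\ell$ cannot pair with $\{c\}$ (already dominating, hence inadmissible in a coalition) and for any other leaf $\ell'$ the union $\{\ell, \ell'\}$ misses the remaining leaves. Diameter $3$ gives a double-star with adjacent centers $u, v$ carrying $a \ge 1$ and $b \ge 1$ leaves respectively; the subcase $a = b = 1$ is $P_4$, and if $a + b \ge 3$ (WLOG $b \ge 2$), then any leaf $v_1$ of $v$ has no valid partner $w$, because dominating a second leaf $v_2$ of $v$ forces $w \in \{v, v_2\}$ while dominating a leaf $u_1$ of $u$ forces $w \in \{u, u_1\}$, and these sets are disjoint.

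For the converse, I would verify $C(T) = n$ for each of the four permitted paths by exhibiting $\Pi$ and the required coalitions: in $P_1$ and $P_2$ every singleton is dominating; in $P_3 = v_1 v_2 v_3$ the singleton $\{v_2\}$ is dominating while $\{v_1\}$ and $\{v_3\}$ form a coalition; in $P_4 = v_1 v_2 v_3 v_4$ the pairs $(\{v_1\}, \{v_3\})$ and $(\{v_2\}, \{v_4\})$ each form a coalition. The only step expected to require real care is the diameter-$3$ case, specifically ensuring the obstruction in non-path double-stars is clean and that the two forcing constraints on $w$ are correctly identified as disjoint; the rest is routine structural bookkeeping.
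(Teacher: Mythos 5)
The paper does not prove this statement: it is quoted from the earlier work of Bakhshesh, Henning and Pradhan~\cite{bhp} and used as a point of reference for the tree results on $C_2$, so there is no in-paper proof to compare yours against. On its own merits, your argument is correct and complete. The reduction to the all-singletons partition, the diameter bound via the two disjoint forcing sets $N[v_0]=\{v_0,v_1\}$ and $N[v_d]=\{v_{d-1},v_d\}$ for a potential partner of $\{v_2\}$, the enumeration of trees of diameter at most $3$ as $K_1$, $K_2$, stars, and double stars, and the leaf-based obstructions in the star ($n\ge 4$) and non-$P_4$ double-star cases are all sound; the converse verifications for $P_1,\dots,P_4$ are routine and correctly done. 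The one definitional point you handle correctly but should keep visible is that a singleton $\{c\}$ which is itself a dominating set is an admissible block of the partition yet cannot serve as a coalition partner (a coalition requires \emph{neither} set to be dominating), which is exactly what kills the star with at least three leaves.
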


Here, we give a nontrivial upper bound on the $2$-coalition number of a tree, just in terms of its order, and characterize the extremal trees for the upper bound. In \cite{JAB}, the $2$-coalition number of a tree $T$ of order $n$ was bounded from above by $n/2+1$ (that is, $C_{2}(T)\leq \lfloor n/2\rfloor+1$).

Recall that for $a,b\geq1$, a \textit{double star} $S_{a,b}$ is a tree with exactly two (adjacent) vertices that are not leaves, one of which has $a$ leaf neighbors and the other $b$ leaf neighbors.

\begin{theorem}\label{upper-tree}
If $T$ is a tree of order $n\geq2$, then $C_{2}(T)\leq \big\lfloor \frac{n}{2}\big\rfloor+1$, with equality if and only if $T\in \{P_{2},P_{3},P_{4},P_{5},S_{1,2}\}$.
\end{theorem}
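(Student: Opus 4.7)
The plan is to analyze $C_2(T)$-partitions by exploiting the fact that every $2$-dominating set of a tree contains all leaves. The first step is a \emph{star-shape lemma}: if $\Omega$ is a $C_2(T)$-partition with $|\Omega|\geq 3$, then the $2$-coalition graph $C_2G(T,\Omega)$ is a star whose center $V^{*}$ contains every leaf of $T$. Indeed, any $2$-coalition $V_i\cup V_j$ contains all leaves, so two disjoint coalition pairs would force $T$ to be leafless; hence every two edges of $C_2G(T,\Omega)$ share an endpoint, which makes this graph a star or a triangle $K_3$. The $K_3$ possibility contradicts the leaf-containment by intersection, and intersecting the containments along the edges of the star places every leaf inside the common center.

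Let $W=V(T)\setminus V^{*}$, and denote by $\{v_1\},\ldots,\{v_m\}$ the singleton partners of $V^{*}$. For each pair $i\neq j$, applying $2$-domination of $V^{*}\cup\{v_i\}$ at $v_j$ yields either $|N(v_j)\cap V^{*}|\geq 2$, or $|N(v_j)\cap V^{*}|=1$ and $v_iv_j\in E(T)$; in particular $|N(v_j)\cap V^{*}|\geq 1$. Since $T$ is triangle-free, when $m\geq 3$ at most one of the singletons can satisfy $|N(v_j)\cap V^{*}|=1$, so $\sum_{i=1}^{m}|N(v_i)\cap V^{*}|\geq 2m-1$. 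Combined with the forest bound that $T[V^{*}\cup\{v_1,\ldots,v_m\}]$ has at most $|V^{*}|+m-1$ edges, this yields $m\leq|V^{*}|$; for $m\leq 2$ the inequality is immediate from $|V^{*}|\geq 2$. Since non-singleton partners of $V^{*}$ have size at least $2$ and together partition $W$,
\[
|\Omega|-1\;\leq\; m+\tfrac{|W|-m}{2}\;=\;\tfrac{m+n-|V^{*}|}{2}\;\leq\;\tfrac{n}{2},
\]
which gives $|\Omega|\leq\lfloor n/2\rfloor+1$. The case $|\Omega|\leq 2$ is immediate from $\lfloor n/2\rfloor+1\geq 2$.

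For the equality characterization, tightness forces every non-singleton partner to have size exactly $2$ together with $m=|V^{*}|$ if $n$ is even and $m=|V^{*}|-1$ if $n$ is odd. A case split on $(m,|V^{*}|)$ yields the listed trees: the case $m=|V^{*}|=2$ gives $V^{*}$ equal to the set of leaves so that $T$ is a path, and the two non-leaf singletons must be adjacent, forcing $T=P_4$; the case $m\geq 3$ with $m=|V^{*}|$ forces $T[V^{*}\cup\{v_1,\ldots,v_m\}]$ to be a bipartite spanning tree on $2m$ vertices, and the requirement that $V^{*}$ contains all leaves and is not $2$-dominating eliminates every $n\geq 6$; the odd-$n$ cases $m=|V^{*}|-1\in\{1,2\}$ produce $P_5$ and $S_{1,2}$ after verifying when $V^{*}\cup\{v_i\}$ is $2$-dominating; and the base case $|\Omega|\leq 2$ accounts for $P_2$ and $P_3$. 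The converse direction is a direct verification that each of the five listed trees admits a $2$-coalition partition of size $\lfloor n/2\rfloor+1$.

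The main obstacle is the equality analysis for $n\geq 6$: the tight arithmetic equalities must be translated into rigid structural constraints on $T$, and a careful case analysis is needed to rule out every way of placing the size-$2$ doubleton partners so that every coalition union $V^{*}\cup V_i'$ remains $2$-dominating while all leaves remain inside $V^{*}$. Once this bookkeeping is completed, only the five listed trees survive.
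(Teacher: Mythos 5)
Your upper bound argument is correct and takes a genuinely different route from the paper. Both proofs begin by showing that all leaves lie in a single set $V^{*}$ of the partition and that every other set is a $2$-coalition partner of $V^{*}$ (your star-shape lemma is the paper's observation that $L(T)$ meets at most one set in a partition of size at least $3$). From there the paper bounds the number of partners of $V^{*}$ by $\Delta(T)$ via its Lemma~\ref{lemma-k-coal} and combines $C_{2}(T)\le 1+\Delta(T)$ with $C_{2}(T)\le 1+n-\ell(T)$, using the tree inequality $\Delta(T)\le\ell(T)$. You instead bound the number of \emph{singleton} partners $m$ by counting edges from $\{v_1,\ldots,v_m\}$ into $V^{*}$: the $2$-domination condition together with triangle-freeness gives at least $2m-1$ such edges, while the forest bound caps the induced subgraph at $|V^{*}|+m-1$ edges, yielding $m\le|V^{*}|$ and hence $|\Omega|-1\le (m+|W|)/2\le n/2$. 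This is a clean, self-contained argument that avoids Lemma~\ref{lemma-k-coal} entirely, and I verified each step (including the degenerate cases $m\le 2$, where $|V^{*}|\ge\ell(T)\ge 2$ suffices).

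The equality characterization, however, is a genuine gap: it is only a plan, not a proof. The assertions that carry all the weight --- ``the requirement that $V^{*}$ contains all leaves and is not $2$-dominating eliminates every $n\ge 6$,'' ``only the five listed trees survive'' --- are exactly the statements that need proof, and you explicitly defer them (``Once this bookkeeping is completed\ldots''). This is where the paper spends most of its effort, running separate case analyses on the parity of $n$, on whether $\ell(T)$ equals $\Delta(T)$ or $\Delta(T)+1$, on whether $V^{*}=L(T)$ or $L(T)\subsetneq V^{*}$, and on the lengths of the root-to-leaf paths, repeatedly exhibiting a vertex that fails to be $2$-dominated by $V^{*}\cup\{v\}$. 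There is also a concrete inaccuracy in your tightness bookkeeping for odd $n$: equality does not force $m=|V^{*}|-1$; the case $m=|V^{*}|$ with exactly one partner of size $3$ (and the rest of size $2$) is also consistent with $|\Omega|=(n+1)/2$, so your case split on $(m,|V^{*}|)$ as stated misses a branch. To turn the proposal into a proof you would need to enumerate all tight configurations correctly and then carry out the structural elimination for each, which is the substantive part of the theorem.
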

\begin{proof}
In order to characterize all trees for which the upper bound holds with equality, we first give a proof for the inequality $C_{2}(T)\leq \lfloor n/2\rfloor+1$ for any tree $T$ of order $n\geq2$. Let $\Omega$ and $L(T)$ be a $C_{2}(T)$-partition and the set of leaves in $T$, respectively, and let $\ell(T)=|L(T)|$.

Suppose that $L(T)\cap V_{i}\neq \emptyset$ for three sets $V_{1},V_{2},V_{3}\in \Omega$. We may assume, without loss of generality, that $V_{1}$ forms a $2$-coalition with a set $U\in \Omega\setminus \{V_{3}\}$. In such a situation, any leaf $x\in V_{3}$ has at least two neighbors in $U\cup V_{1}$, a contradiction. Thus, $L(T)$ intersects at most two sets in~$\Omega$. Assume that $L(T)\cap V_{i}\neq \emptyset$ for two sets $V_{1},V_{2}\in \Omega$. If there exists a set $U\in \Omega\setminus \{V_{1},V_{2}\}$, then we may assume that it forms a $2$-coalition with a set $W\in \Omega\setminus \{V_{1}\}$. This is in contradiction with the existence of at least one leaf in $V_{1}$ as $U\cup W$ is a $2$-dominating set in $T$. Therefore, $C_{2}(T)=2\leq \lfloor n/2\rfloor+1$. So, from now on, we can assume that $L(T)\subseteq V$ for a unique set $V\in \Omega$. This necessarily implies that each set in $\Omega\setminus \{V\}$ forms a $2$-coalition with $V$. Notice that the set $V$ is a $2$-coalition partner of at most $\Delta(T)$ sets in $\Omega$ due to Lemma \ref{lemma-k-coal} for $k=2$. Therefore, $C_{2}(T)\leq1+\Delta(T)$. On the other hand, it is readily observed that $C_{2}(T)\leq1+n-|V|\leq1+n-\ell(T)$. By taking the last two upper bounds on $C_{2}(T)$ into account, we get
\begin{equation}\label{tree}
C_{2}(T)\leq \frac{n-\ell(T)+\Delta(T)}{2}+1\leq \frac{n}{2}+1,
\end{equation}
leading to the desired upper bound.

It is easy to check that $C_{2}(T)=\lfloor n/2\rfloor+1$ if $T$ is any tree in the family $\{P_{2},P_{3},P_{4},P_{5},S_{1,2}\}$. Conversely, let equality hold in the upper bound for $T\ncong P_{2}$. We root $T$ at a vertex $r$ of maximum degree and let $L(T)=\{u_{1},\ldots,u_{\ell(T)}\}$.

Assume first that $n$ is even. If $L(T)$ intersects two sets in $\Omega$, then $C_{2}(T)=2$. Hence, $n=2$, which contradicts our assumption $T\ncong P_{2}$. Therefore, $L(T)\subseteq V$ for a unique set $V\in \Omega$. On the other hand, we have $\Delta(T)=\ell(T)$ because the inequality chain (\ref{tree}) necessarily holds with equality. It follows that $T$ is a union of $\ell(T)$ paths $Q_{1},\ldots,Q_{\ell(T)}$ having $r$ as a common endvertex, in which $u_{1},\ldots,u_{\ell(T)}$ are the other endvertices, respectively. Because (\ref{tree}) holds with equality, the resulting equality $C_{2}(T)=1+n-|V|=1+n-\ell(T)$ guarantees that $L(T)=V$ and that $\Omega=\big{\{}L(T)\big{\}}\bigcup \big{\{}\{v\}\mid v\notin L(T)\big{\}}$. In fact, every singleton set $\{v\}\in \Omega$ must form a $2$-coalition only with $L(T)$. If $|V(Q_{1})|=\cdots=|V(Q_{\ell(T)})|=2$, then $T\cong K_{1,\ell(T)}$, and hence $C_{2}(T)=2<n/2+1$, a contradiction. Therefore, at least one of $Q_{1},\ldots,Q_{\ell(T)}$, say $Q_{1}$, has at least three vertices. Suppose that $|V(Q_{1})|\geq4$ and that $u,v\in V(Q_{1})\setminus \{r,u_{1}\}$, in which $v$ is the support vertex adjacent to $u_{1}$. Then, $u$ has at most one neighbor in $L(T)\cup \{v\}$, contradicting the fact that $\{v\}$ and $L(T)$ form a $2$-coalition in $T$. Therefore, $|V(Q_{1})|=3$. Because $r$ has at least two neighbors in $L(T)\cup \{v\}$, it follows that $r$ is adjacent to at least one leaf. If there is a path $Q_{i}$ with $|V(Q_{i})|\geq3$ for some $i\in[\ell(T)]\setminus \{1\}$, then any internal vertex of $Q_{i}$ is not $2$-dominated by $L(T)\cup \{v\}$, a contradiction. Hence, all other neighbors of $r$ are leaves. The above argument shows that $T\cong S_{1,\ell(T)-1}$. So, by taking into account the fact that $L(T)\in \Omega$, we have $C_{2}(S_{1,\ell(T)-1})=3=(\ell(T)+2)/2+1$, and hence $\ell(T)=2$. Therefore, $T\cong P_{4}$.

Now let $n$ be odd. If $L(T)$ intersects two sets in $\Omega$, we have $C_{2}(T)=2=(n-1)/2+1$, and hence $T\cong P_{3}$. So, let $L(T)\subseteq V$ for a unique set $V\in \Omega$. Suppose, to the contrary, that $\ell(T)\geq \Delta(T)+2$. Then, we infer from (\ref{tree}) that $C_{2}(T)\leq n/2<\lfloor n/2\rfloor+1$, which is impossible. Therefore, $\ell(T)\in \{\Delta(T),\Delta(T)+1\}$. Due to this, we consider two cases.\vspace{0.75mm}\\
\textit{Case 1.} $\ell(T)=\Delta(T)+1$. In such a situation, $T$ is obtained from $\ell(T)$ paths $R_{1},\ldots,R_{\ell(T)}$ having $r$ as a common endvertex, in which $u_{1},\ldots,u_{\ell(T)}$ are the other endvertices, respectively. Moreover, precisely two paths, say $R_{1}$ and $R_{2}$, have a common internal vertex. If $C_{2}(T)\leq n-\ell(T)$, then it results in the impossible inequality $C_{2}(T)\leq n/2$. Therefore, $C_{2}(T)=1+n-\ell(T)$, which in turn implies that $V=L(T)$ and that $\Omega=\big{\{}L(T)\big{\}}\bigcup \big{\{}\{v\}\mid v\notin L(T)\big{\}}$. Moreover, $L(T)$ is the only $2$-coalition partner of $\{v\}$ for each $v\in V(T)\setminus L(T)$. An argument similar to that of the case $n\equiv0$ (mod $2$) shows that $|V(R_{i})|\leq3$ for each $i\in[\ell(T)]$. In particular, $|V(R_{1})|=|V(R_{2})|=3$. Let $v$ be the unique common internal vertex of $R_{1}$ and $R_{2}$. If there exists an index $i\in[\ell(T)]\setminus \{1,2\}$ such that $|V(R_{i})|=3$, then the internal vertex of $R_{i}$ is not $2$-dominated by $L(T)\cup \{v\}$. Therefore, $|V(R_{i})|=2$ for each $i\in[\ell(T)]\setminus \{1,2\}$. This shows that $T$ is isomorphic to the double star $S_{2,\ell(T)-2}$. With this in mind, $C_{2}(T)=3=(\ell(T)+1)/2+1$ implies that $\ell(T)=3$, and hence $T\cong S_{1,2}$.\vspace{0.75mm}\\
\textit{Case 2.} $\ell(T)=\Delta(T)$. Notice that $T$ is obtained from $\ell(T)$ paths $O_{1},\ldots,O_{\ell(T)}$ having $r$ as a common endvertex, in which $u_{1},\ldots,u_{\ell(T)}$ are the other endvertices, respectively. If $L(T)\cap V_{1}\neq \emptyset \neq L(T)\cap V_{2}$ for two sets $V_{1},V_{2}\in \Omega$, then $C_{2}(T)=2=(n+1)/2$. Therefore, $T\cong P_{3}$. So, we assume that $L(T)\subseteq V$ for a unique set $V\in \Omega$. We consider two cases.\vspace{0.75mm}\\
\textit{Subcase 2.1.} $C_{2}(T)\leq n-\ell(T)$. In such a situation, $C_{2}(T)=n-\ell(T)$, for otherwise $C_{2}(T)\leq(1+\Delta(T)+n-\ell(T)-1)/2<(n+1)/2$. We need to distinguish two more possibilities.\vspace{0.75mm}\\
\textit{Subcase 2.1.1.} $L(T)=V$. Since $C_{2}(T)=n-\ell(T)$, it follows that
\begin{equation}\label{par}
\Omega=\big{\{}L(T)\big{\}}\bigcup \big{\{}\{x,y\}\big{\}}\bigcup \big{\{}\{v\}\mid v\notin L(T)\cup \{x,y\}\big{\}}
\end{equation}
for some $x,y\in V(T)\setminus L(T)$. Moreover, every set in $\Omega\setminus \{L(T)\}$ forms a $2$-coalition with $L(T)$. The following claim will turn out to be useful.\vspace{0.75mm}\\
\textbf{Claim 1.} \textit{$|V(O_{i})|\leq4$ for each $i\in[\ell(T)]$.}\vspace{0.25mm}\\
\textit{Proof of Claim 1.} Suppose, to the contrary, that $|V(O_{i})|\geq5$ for some $i\in[\ell(T)]$. This in particular shows that $|V(O_{i})\setminus \{x,y,u_{i}\}|\geq2$. If $r\notin \{x,y\}$, then it is not difficult to check that there always exists a vertex in $O_{i}$ that is not $2$-dominated by $L(T)\cup \{r\}$, which is impossible. So, $r\in \{x,y\}$. Renaming the vertices if necessary, we can assume that $r=y$. Because $L(T)\cup \{x,y\}$ is a $2$-dominating set in $T$ and since $|V(O_{i})|\geq5$, it follows that $x$ belongs to $V(O_{i})$. Again, since $L(T)$ and $\{x,y\}$ form a $2$-coalition in $T$, it follows that $|V(O_{i})|=5$ (more precisely, $O_{i} \colon yaxbu_{i}$ for some vertices $a$ and $b$ in $T$). On the other hand, if there is a vertex $z\in V(O_{j})\setminus \{r,u_{j}\}$ for some $j\in \{1,\ldots,\ell(T)\}\setminus \{i\}$, then $x$ is not $2$-dominated by $L(T)\cup \{z\}$, a contradiction. In fact, we have proved that $|V(O_{i})|=5$ and $|V(O_{j})|=2$ for each $j\in \{1,\ldots,\ell(T)\}\setminus \{i\}$. With this in mind, $T$ is obtained from $P_{6}$ by joining $\ell(T)-2$ new vertices to one of its support vertices. In this case, we get $3=C_{2}(T)<(n+1)/2$, a contradiction. Thus, the statement of the claim holds. $(\square)$\vspace{0.75mm}

First, in view of Claim 1, we assume that $|V(O_{1})|=4$. Suppose that $|V(O_{i})|\geq3$ for some $i\in[\ell(T)]\setminus \{1\}$. If $r\notin \{x,y\}$, then $\{r\}$ is a $2$-coalition partner of $L(T)$. This contradicts the fact that the unique neighbor of $u_{1}$ has only one neighbor in $L(T)\cup \{r\}$. So, we must have $r\in \{x,y\}$. Let $v\in V(O_{1})\setminus \{x,y,u_{1}\}$. It is easy to see that $L(T)\cup \{v\}$ is not a $2$-dominating set in $T$ because it does not $2$-dominate any internal vertex of $O_{i}$, a contradiction. Therefore, $|V(O_{i})|=2$ for every $i\in[\ell(T)]\setminus \{1\}$. This shows that $3=C_{2}(T)=(\ell(T)+4)/2$, and hence $\ell(T)=2$. Thus, $T\cong T_{5}$.

Let $|V(O_{i})|\leq3$ for each $i\in[\ell(T)]$. If $|V(O_{1})|=\cdots=|V(O_{\ell(T)})|=2$, then $T\cong K_{1,\ell(T)}$. In such a situation, $2=C_{2}(T)=(\ell(T)+2)/2$ leads to $T\cong P_{3}$. Now assume that there exists an index $i\in[\ell(T)]$ such that $|V(O_{i})|=3$. If there is only one such index, then $T\cong S_{1,\ell(T)-1}$ with $|\Omega|=2$ due to (\ref{par}). However, this is a contradiction as $C_{2}(S_{1,\ell(T)-1})=3$. Therefore, it happens that more than one path in $\{O_{1},\ldots,O_{\ell(T)}\}$, say $O_{1},\ldots,O_{p}$, are on three vertices. If $x,y\in V(O_{i})$ for some $i\in[p]$, then $x$ or $y$ is not $2$-dominated by $L(T)\cup \{z\}$, in which $z$ is the internal vertex of a path $O_{j}$ with $j\in \{1,\ldots,p\}\setminus \{i\}$. This is a contradiction. Hence, $x$ and $y$ are internal vertices of distinct paths $O_{i}$ and $O_{j}$ for some $i,j\in[p]$. If there is an index $s\in \{1,\ldots,p\}\setminus \{i,j\}$ such that $|V(O_{s})|=3$, then the internal vertex of $V(O_{s})$ is not $2$-dominated by $L(T)\cup \{x,y\}$, a contradiction. The above argument guarantees that $T$ is obtained from $K_{1,\ell(T)}$ by subdividing two edges, each of them exactly once. So, we deduce that $3=C_{2}(T)=(\ell(T)+4)/2$, and thus $T\cong T_{5}$.\vspace{0.75mm}\\
\textit{Subcase 2.1.2.} $L(T)\varsubsetneq V$. The inequality $1+n-|V|\geq C_{2}(T)=n-\ell(T)$ implies that $V=L(T)\cup \{w\}$ for some $w\in V(T)\setminus L(T)$. This in turn shows that $\Omega=\big{\{}V\big{\}}\bigcup \big{\{}\{v\}\mid v\notin V\big{\}}$ and that each singleton set $\{v\}$ in $\Omega$ forms a $2$-coalition only with $V$. 

Assume first that $r=w$. Since $V$ is not a $2$-dominating set in $T$, there must be at least two vertices $a,b\in V(O_{i})$ for some $i\in[\ell(T)]$. If a path $O_{j}$ has an internal vertex $c$ for some $j\in \{1,\ldots,\ell(T)\}\setminus \{i\}$, then $V\cup \{c\}$ does not $2$-dominate the vertices $a$ and $b$, a contradiction. Therefore, $|V(O_{j})|=2$ for every $j\in \{1,\ldots,\ell(T)\}\setminus \{i\}$. On the other hand, in order for both $V\cup \{a\}$ and $V\cup \{b\}$ to be $2$-dominating sets in $T$, we necessarily have $|V(O_{i})|=4$. The above argument results in $3=C_{2}(T)=(\ell(T)+4)/2$, and hence $T\cong T_{5}$.

Now assume that $r\neq w$. Without loss of generality, we may assume that $w\in V(O_{1})$. Let $O_{11}$ and $O_{12}$ be the subpaths of $O_{1}$ from $r$ to $w$ and from $w$ to $u_{1}$, respectively. If $|V(O_{12})|\geq4$, then any internal vertex of $O_{12}$ is not $2$-dominated by $V\cup \{r\}$, which is impossible. If $O_{12}:wyu_{1}$ for some $y\in V(T)$, then $V(T)=V\cup \{r,y\}$ since $V\cup \{y\}$ is a $2$-dominating set in $T$. Now, $3=C_{2}(T)=(\ell(T)+4)/2$ implies that $T\cong T_{5}$. In view of this, we can assume that $|V(O_{12})|=2$. Analogously, we infer that $|V(O_{11})|\leq3$ and that $T\cong T_{5}$ if $|V(O_{11})|=3$. So, it remains for us to consider the case when $|V(O_{1})|=3$. If $|V(O_{2})|=\cdots=|V(O_{\ell(T)})|=2$, then $2=C_{2}(T)=(\ell(T)+3)/2$. So, $\ell(T)=1$, which is impossible. Therefore, there exists at least one index $i\in[\ell(T)]\setminus \{1\}$ such that $|V(O_{i})|\geq3$. A similar argument shows that $|V(O_{i})|\leq3$ for each $i\in[\ell(T)]\setminus \{1\}$. If $O_{i}:rxu_{i}$ and $O_{j}:ryu_{j}$ for some $x,y\in V(T)$ and two indices $i,j\in[\ell(T)]\setminus \{1\}$, then the vertex $x$ is not $2$-dominated by $V\cup \{y\}$, a contradiction. Therefore, $|O_{i}|=3$ for a unique index $i\in[\ell(T)]\setminus \{1\}$. In such a situation, $3=C_{2}(T)=(\ell(T)+4)/2$ implies that $T\cong T_{5}$.\vspace{0.75mm}\\
\textit{Subcase 2.2.} $C_{2}(T)=1+n-\ell(T)$. Then, $L(T)\in \Omega$ and for each $v\in V(T)\setminus L(T)$, the singleton set $\{v\}\in \Omega$ forms a $2$-coalition with $L(T)$. If $|V(Q_{1})|=\cdots=|V(Q_{\ell(T)})|=2$, then $T\cong K_{1,\ell(T)}$. Therefore, the resulting equality $2=C_{2}(T)=(\ell(T)+2)/2$ implies that $T\cong P_{3}$.

Since $\{r\}$ is a $2$-coalition partner of $L(T)$, it follows that $|V(O_{i})|\leq3$ for each $i\in[\ell(T)]$. Moreover, if two paths $O_{i}$ and $O_{j}$ with $i,j\in \{1,\ldots,\ell(T)\}$ and internal vertices $x$ and $y$, respectively, are of order~ $3$, then $x$ is not $2$-dominated by $L(T)\cup \{y\}$. This is a contradiction. In fact, by the above argument, we may assume that $|V(O_{1})|=3$ and $|V(O_{2})|=\cdots=|V(O_{\ell(T)})|=2$. Hence, $T\cong S_{1,\ell(T)-1}$. Thus, we deduce from $3=C_{2}(T)=(\ell(T)+3)/2$ that $T\cong S_{1,2}$.

All in all, we have proved that $T$ is isomorphic to one of the trees in $\{P_{2},P_{3},P_{4},P_{5},S_{1,2}\}$. This completes the proof.
\end{proof}


\section{Cubic graphs}

We observe that $C_{k}(G)=2$ if $k>3$. By the bound in Theorem~\ref{lower2}, we infer that $C_{2}(G)\ge 4$ for any cubic graph $G$. We prove that this inequality is in fact equality.

\begin{theorem}\label{thm:C2_cubic}
If $G$ is a cubic graph, then $C_2(G)=4$.
\end{theorem}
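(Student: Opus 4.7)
The lower bound $C_2(G) \geq 4$ is immediate from Theorem~\ref{lower2} applied with $k = 2$ and $\delta(G) = 3$, so the task reduces to proving the matching upper bound $C_2(G) \leq 4$. The plan is to show that the 2-coalition graph $H = C_2G(G, \Omega)$ of any $C_2(G)$-partition $\Omega$ has at most $4$ vertices, by combining a direct cubic-graph observation with Lemma~\ref{lemma-k-coal}.

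The central structural observation I will prove is the following: \emph{if $\{A, B\}$ and $\{C, D\}$ are both 2-coalitions in $\Omega$ with $\{A, B\} \cap \{C, D\} = \emptyset$, then $\Omega = \{A, B, C, D\}$.} Indeed, suppose for contradiction that there is a further set $E \in \Omega$, and pick a vertex $v \in E$. Then $v$ lies outside the two disjoint 2-dominating sets $A \cup B$ and $C \cup D$, so $v$ has at least two neighbors in each. This forces $\deg(v) \geq 4$, contradicting that $G$ is cubic. As a direct consequence, whenever $|\Omega| \geq 5$, any two 2-coalitions in $\Omega$ must share a set; equivalently, any two edges of $H$ share a common vertex.

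A graph in which any two edges share a vertex is necessarily either the triangle $K_3$ or a star $K_{1, r}$. By the remark at the end of Section~1, no set of $\Omega$ is a 2-dominating set of cardinality $2$, so every set in $\Omega$ forms a 2-coalition and hence $H$ has no isolated vertices. By Lemma~\ref{lemma-k-coal} applied with $k = 2$ and $\Delta(G) = 3$, every vertex of $H$ has degree at most $3$. Thus either $H \cong K_3$ with $|\Omega| = 3$, or $H \cong K_{1, r}$ with $r \leq 3$, giving $|\Omega| = r + 1 \leq 4$. In either case $|\Omega| \leq 4$, contradicting the assumption $|\Omega| \geq 5$. Hence $C_2(G) \leq 4$, and combined with the lower bound, $C_2(G) = 4$.

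The only genuinely creative step is to spot the disjoint-coalitions observation; after that, the proof is essentially the pigeonhole inequality $2 + 2 > 3$ combined with a routine classification of graphs whose edges pairwise intersect, finished off by a direct appeal to Lemma~\ref{lemma-k-coal}.
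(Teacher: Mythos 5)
Your proof is correct and rests on the same two ingredients as the paper's: the pigeonhole observation that a vertex outside two disjoint $2$-dominating sets would need degree at least $4$, and Lemma~\ref{lemma-k-coal} with $k=2$ to bound the number of coalition partners. You merely package the case analysis more cleanly, via the classification of graphs whose edges pairwise intersect as triangles or stars, where the paper chases the same contradiction through an explicit sequence of sets $V_1,\ldots,V_5$.
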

\begin{proof}
As an immediate consequence of Theorem \ref{lower2} with $k=2$, we get $C_2(G)\geq4$. So, it remains to prove that $C_{2}(G)\le 4$.

Suppose that $C_{2}(G)=t>4$ and let $\Theta=\{V_1,\ldots,V_t\}$ be a $C_{2}(G)$-partition. If all sets in $\Theta\setminus \{V_{1}\}$ form a $2$-coalition with $V_{1}$, then we have $C_{2}(G)=|\Theta|\leq4$ by Lemma \ref{lemma-k-coal} with $k=2$. So, we may assume, by renaming the sets if necessary, that $V_{2}$ does not form a $2$-coalition with $V_{1}$. Note that $V_{1}$ is a $2$-coalition partner of a set in $\Theta$, say $V_{3}$. Suppose that there exist two sets $V_{i},V_{j}\in \Theta\setminus \{V_{1},V_{3}\}$ which form a $2$-coalition. Let $v\in V_{r}$ for some $r\in[t]\setminus \{1,3,i,j\}$. Then, $3\geq|N_{G}(v)\cap(V_{1}\cup V_{3})|+|N_{G}(v)\cap(V_{i}\cup V_{j})|\geq4$, a contradiction. Therefore, each set in $\Theta\setminus \{V_{1},V_{3}\}$ forms a $2$-coalition with $V_{1}$ or $V_{3}$. In particular, $V_{2}$ is a $2$-coalition partner of $V_{3}$.

Analogously, we may assume that there is a set, say $V_{4}$, which does not form an $2$-coalition with $V_{3}$. Hence, it forms a $2$-coalition with $V_{1}$ by the above argument. In such a situation, $3\geq|N_{G}(v)\cap(V_{2}\cup V_{3})|+|N_{G}(v)\cap(V_{1}\cup V_{4})|\geq4$ for each $v\in V_{5}$, which is impossible. Thus, $|\Theta|\leq4$. This completes the proof.
\end{proof}

By the bound in Theorem~\ref{lower2}, we infer that $C_3(G)\ge 3$ for any cubic graph $G$. The exact value of $C_3(G)$ in cubic graphs depends on whether or not $G$ is bipartite.
\begin{theorem}
\label{thm:C3_cubic}
If $G$ is a cubic graph, then \[
C_{3}(G)=
\begin{cases}
4, & \textrm{if} \ G\textrm{ is bipartite};\\
3, & \textrm{otherwise}.
\end{cases}
\]
\end{theorem}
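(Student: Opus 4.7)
The plan is to sandwich $C_3(G)$ between $3$ and $4$ for every cubic graph $G$ and then decide in which case each value is attained. The lower bound $C_3(G)\ge 3$ comes immediately from Theorem~\ref{lower2} (with $\delta=k=3$). For the upper bound $C_3(G)\le 4$ I would mimic the proof of Theorem~\ref{thm:C2_cubic}. Lemma~\ref{lemma-k-coal} applied with $k=3$ and $\Delta=3$ shows that each set in a $C_3(G)$-partition $\Theta$ has at most $\max\{1,\Delta-k+2\}=2$ coalition partners, so the coalition graph $C_3G(G,\Theta)$ has maximum degree at most~$2$. Assuming $|\Theta|\ge 5$ and fixing a coalition $(V_1,V_3)$, any second coalition $(V_i,V_j)$ with $\{i,j\}\cap\{1,3\}=\emptyset$ would force a vertex $v$ in any fifth set $V_r$ to satisfy $3=\deg(v)\ge |N(v)\cap(V_1\cup V_3)|+|N(v)\cap(V_i\cup V_j)|\ge 6$, a contradiction. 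Hence every coalition touches $V_1$ or $V_3$, and since each of them has at most one further partner, $|\Theta|\le 4$.

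For the lower bound in the bipartite case, I would use the bipartition $(A,B)$; since $G$ is cubic, bipartite and simple, $|A|=|B|\ge 3$, so I can choose nonempty partitions $A=A_1\sqcup A_2$ and $B=B_1\sqcup B_2$. The set $A=A_1\cup A_2$ is $3$-dominating in $G$ (every vertex in $B$ has all three of its neighbors in $A$), whereas $A_1$ is not $3$-dominating (any vertex of $A_2$ has no neighbor in $A_1$), and symmetrically for $A_2$; hence $(A_1,A_2)$ is a $3$-coalition, and so is $(B_1,B_2)$. Therefore $\{A_1,A_2,B_1,B_2\}$ is a $3$-coalition partition of cardinality~$4$, which together with the upper bound yields $C_3(G)=4$.

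The main obstacle is the converse direction: if $G$ is not bipartite, then $C_3(G)\le 3$. I would argue by contradiction, assuming the existence of a $3$-coalition partition $\Theta=\{V_1,V_2,V_3,V_4\}$. Since its coalition graph has four vertices with minimum degree at least~$1$ and maximum degree at most~$2$, it must be one of $2P_2$, $P_4$ or $C_4$. The key observation is that for every $v\in V_i$ and every coalition $(V_j,V_k)$ with $i\notin\{j,k\}$, all three neighbors of $v$ must lie in $V_j\cup V_k$ (because $v\notin V_j\cup V_k$ and $\deg(v)=3$). Intersecting this restriction over the coalitions not containing $V_i$ pins down the possible adjacencies in $G$ in each of the three cases: in $2P_2$ (say coalitions $V_1V_2$ and $V_3V_4$) all edges run between $V_1\cup V_2$ and $V_3\cup V_4$; in $P_4$ (coalitions $V_1V_2,V_2V_3,V_3V_4$) a short computation forces that the only edges are $V_1V_3$, $V_2V_3$ and $V_2V_4$, so the bipartition $(V_1\cup V_2,\,V_3\cup V_4)$ works; and in $C_4$ the neighbors of any $v\in V_i$ are forced into the unique set $V_{i'}$ ``opposite'' to $V_i$ on the cycle, leaving edges only between $V_1$ and $V_3$ and between $V_2$ and $V_4$. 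Each of the three resulting edge patterns is bipartite, contradicting the hypothesis on $G$; hence no such $\Theta$ exists, and $C_3(G)=3$.
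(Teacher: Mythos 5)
Your proposal is correct and follows essentially the same route as the paper: the lower bound from Theorem~\ref{lower2}, the upper bound $C_3(G)\le 4$ by the argument of Theorem~\ref{thm:C2_cubic}, an explicit refinement of the bipartition for the bipartite case, and bipartiteness forced by the structure of the coalition graph for the converse. The only cosmetic difference is that where you enumerate the three possible coalition graphs $2P_2$, $P_4$, $C_4$, the paper extracts a matching of size two (present in all three) and applies your $2P_2$ argument to the two disjoint coalitions.
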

\begin{proof}
Note that $3\le C_{3}(G) \le 4$ in any cubic graph $G$. The lower bound is mentioned before the theorem, while the upper bound can be verified by an argument similar to that of the proof of Theorem \ref{thm:C2_cubic}.

First, let $G$ be a (cubic) bipartite graph with partite sets $A$ and $B$. Clearly, $|A|>2$ and $|B|>2$. Let vertices $a\in A$ and $b\in B$ be chosen arbitrarily. Let $V_1=\{a\}$, $V_2=A\setminus \{a\}$, $V_3=\{b\}$ and $V_4=B\setminus\{b\}$. It is easy to see that none of the sets $V_i$, for $1\leq i\leq4$, is a $3$-dominating set in $G$. On the other hand, $V_1$ forms a $3$-coalition with $V_2$, and $V_3$ forms a $3$-coalition with $V_4$. Therefore, $\{V_1,V_2,V_3,V_4\}$ is a $3$-coalition partition of $G$, and so $C_3(G)=4$.

Now, assume that $C_3(G)=4$, and let $\Theta=\{V_1,V_2,V_3,V_4\}$ be a $C_3(G)$-partition.  We claim that $C_3G(G,\Theta)$ contains a matching of cardinality $2$. By Lemma~\ref{lemma-k-coal} with $k=3$, the maximum degree in $C_3G(G,\Theta)$ is at most $2$. If $\Delta\big{(}C_3G(G,\Theta)\big{)}=1$, the claim trivially holds. Otherwise, assume without loss of generality that $V_1$ forms a $3$-coalition with $V_2$ and $V_3$, respectively. However, since $V_4$ forms a $3$-coalition with $V_2$ or $V_3$, a matching of cardinality $2$ in $C_3G(G,\Theta)$ is easily found.

Without loss of generality, renaming the sets if necessary, let $V_1$ form a coalition with $V_2$ and let $V_3$ form a $3$-coalition with $V_4$. Now, consider a vertex $u\in V(G)$ such that $u\in V_1\cup V_2$. Since $V_3\cup V_4$ is a $3$-dominating set in $G$, every vertex in $N_G(u)$ belongs to $V_3\cup V_4$. By an analogous argument, for any vertex $v\in V(G)$ such that $v\in V_3\cup V_4$, all of its neighbors in $G$ belong to $V_1\cup V_2$. We readily infer that $G$ is bipartite with partite sets $V_1\cup V_2$ and $V_3\cup V_4$.

Altogether, we derive that $C_3(G)\ne 4$ (or, equivalently $C_3(G)=3$ by the initial observation in this proof) if and only if $G$ is a non-bipartite graph.
\end{proof}


\section{(Total) \texorpdfstring{$k$}{k}-coalition number of complete bipartite graphs}

Unlike most of the domination-related parameters, whose values for complete bipartite graphs are trivial or easily obtained, the computation of $C_{k}$ for this family of graphs is not even straightforward. In this regard, lower and upper bounds on $C_{k}(K_{s,t})$ were given in~\cite{JAB} as follows.

\begin{theorem}{\rm \cite[Theorem 3.2]{JAB}}
\label{ajc} 
Let $K_{s,t}$ be the complete bipartite graph with $s\leq t$. Then,\vspace{0.9mm}\\
$(i)$ If $k<s$, then $\max\{s+t-4k+4,t-k+2\}\leq C_{k}(K_{s,t})\leq s+t-2k+1$.\vspace{0.75mm}\\
$(ii)$ If $1<k=s$, then $C_{k}(K_{s,t})=4$.\vspace{0.75mm}\\
$(iii)$ If $s<k\leq t$ or $t<k$, then $C_{k}(K_{s,t})=2$.
\end{theorem}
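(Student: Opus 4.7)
My plan is to first characterize the $k$-dominating sets of $K_{s,t}$ and then handle the three parts separately. Let $A$ and $B$ be the partite sets with $|A|=s$ and $|B|=t$. Since every vertex of $A$ has exactly $t$ neighbors (all in $B$) and every vertex of $B$ has exactly $s$ neighbors (all in $A$), a set $D\subseteq V(K_{s,t})$ is $k$-dominating if and only if the following two implications both hold: $B\not\subseteq D$ implies $|A\cap D|\ge k$, and $A\not\subseteq D$ implies $|B\cap D|\ge k$. This will be the workhorse for the remaining arguments.

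For part~(iii), when $s<k$, the inequality $|A\cap D|\ge k$ can never be satisfied, hence every $k$-dominating set must contain all of $B$ (and, when $k>t$, also all of $A$). In any $k$-coalition partition with at least three parts, I pick a part $V_\ell$ disjoint from $B$; it must have a coalition partner $V_j$, and then $V_j$ itself must contain all of $B$, making $V_j$ a $k$-dominating set, which contradicts the definition of a $k$-coalition. Combined with the obvious lower bound coming from any nontrivial bipartition of $B$, this yields $C_k(K_{s,t})=2$.

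For part~(ii), with $1<k=s$, the characterization collapses to: $D$ is $k$-dominating if and only if $A\subseteq D$ or $B\subseteq D$. The lower bound $C_k\ge 4$ is produced by the partition $\{A_1,A_2,B_1,B_2\}$, where $\{A_1,A_2\}$ and $\{B_1,B_2\}$ are nontrivial bipartitions of $A$ and $B$ respectively; no single part is $k$-dominating, yet $A_1\cup A_2=A$ and $B_1\cup B_2=B$ are. For the upper bound $C_k\le 4$, I would imitate the proof of Theorem~\ref{thm:C2_cubic}: assume the partition has at least five parts, invoke Lemma~\ref{lemma-k-coal} to cap degrees in the $k$-coalition graph, and derive a contradiction by showing that the parts meeting $A$ and those meeting $B$ cannot both be coalition-saturated.

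For part~(i), with $k<s$, I would aim at the lower bound $\max\{s+t-4k+4,\,t-k+2\}$ via explicit constructions that distribute $k$-sized blocks of $A$ and $B$ across the parts so that each part has an obvious coalition partner, and at the upper bound $s+t-2k+1$ via a counting argument using Lemma~\ref{lemma-k-coal}, which bounds the number of coalition partners of any part by $t-k+2$. This is the step I expect to fail, and is the likely source of the ``issue'' flagged in the introduction: a quick sanity check with $K_{3,3}$ and $k=2$ gives the claimed upper bound $s+t-2k+1=3$, yet with $A=\{a,b,c\}$ and $B=\{v_1,v_2,v_3\}$ the partition $\{\{a\},\{b,c\},\{v_1,v_2\},\{v_3\}\}$ is a genuine $2$-coalition partition of size~$4$, since $\{a\}\cup\{b,c\}=A$ and $\{v_1,v_2\}\cup\{v_3\}=B$ are both $2$-dominating while none of the four parts is. Thus I would not be able to prove part~(i) as stated; instead I would scale the $K_{3,3}$ example into an infinite family of counterexamples to the upper bound, and then, via a finer case analysis based on how $A$ and $B$ split across the parts (and on which of the two ``covering'' conditions forces the coalitions), work out the correct exact formula for $C_k(K_{s,t})$ that the paper promises to establish.
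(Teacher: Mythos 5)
You have diagnosed this correctly: part~$(i)$ of the statement is false, and the paper has no proof of Theorem~\ref{ajc} --- it quotes the result from~\cite{JAB} precisely in order to refute it and then replaces it by the exact formula of Theorem~\ref{CBG}. Your counterexample is valid and is essentially the paper's own: for $K_{3,3}$ with $k=2$ the partition $\big\{\{a\},\{b,c\},\{v_1,v_2\},\{v_3\}\big\}$ is a $2$-coalition partition of cardinality $4>s+t-2k+1=3$, and it is the $t=3$ instance of the family $K_{3,t}$, $t\ge 3$, used in the paper (whose partition $\big\{\{x_1,x_2,y_1\},\{x_3\},\{y_2\},\ldots,\{y_t\}\big\}$ has cardinality $t+1>t$). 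Your characterization of $k$-dominating sets of $K_{s,t}$ and your treatment of parts~$(ii)$ and~$(iii)$, which are true, match the arguments the paper runs inside the proof of Theorem~\ref{CBG}. The one soft spot is your sketch of the upper bound $C_k(K_{k,t})\le 4$ in part~$(ii)$: Lemma~\ref{lemma-k-coal} only bounds the number of coalition partners of a single set by $\Delta-k+2=t-k+2$, which can be arbitrarily large, so ``imitating the cubic proof'' via degree caps will not close the argument. What does work --- and what the paper uses --- is the structural observation you gesture at, namely that when $s=k$ every $k$-dominating set must contain a full partite set, so the union of any coalition pair contains $X$ or $Y$; from this, two disjoint coalition pairs force $\Theta=\{A,B,C,D\}$ with $A\cup B=X$ and $C\cup D=Y$, and a fifth set has nowhere to find a partner.
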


It must be noted that the statement $(i)$ in Theorem \ref{ajc} is not true as it stands. To see this, as a counterexample, it suffices to consider the complete bipartite graph $K_{3,t}$ for $k=2$ and $t\geq3$. Letting $X=\{x_{1},x_{2},x_{3}\}$ and $Y=\{y_{1},\ldots,y_{t}\}$ be the partite sets of $K_{3,t}$, we observe that 
$\Omega=\big{\{}\{x_{1},x_{2},y_{1}\},\{x_{3}\},\{y_{2}\},\ldots,\{y_{t}\}\big{\}}$ is a $2$-coalition partition of $K_{3,t}$. Therefore, $C_{2}(K_{3,t})\geq|\Omega|=t+1$, which is strictly larger than the given upper bound in Theorem \ref{ajc} for this case.

In what follows, we exhibit the exact formula for $C_{k}(K_{s,t})$ for all possible values of $s$, $t$ and $k$. 
It is readily checked that $C(K_{1,1})=2$, $C(K_{1,t})=3$ for $t\geq2$, and $C(K_{s,t})=s+t$ for $s,t\geq2$; see~\cite{hhhmm}. In view of this, we turn our attention to the case when $k\geq2$.

\begin{theorem}\label{CBG}
Let $s$, $t$ and $k$ be any positive integers with $t\geq s$ and $k\geq2$. Then,
\[
C_{k}(K_{s,t})=\begin{cases}
2, & \textrm{if}\ \ s<k;\\
4, & \textrm{if}\ \ s=k;\\
t-k+3, & \textrm{if}\ \ k+1\leq s\leq3k-2;\\
s+t-4k+4, & \textrm{if}\ \ s\geq3k-1.
\end{cases}
\]
\end{theorem}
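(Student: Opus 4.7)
I would treat the four ranges of $s$ separately. The cases $s<k$ and $s=k$ reduce to short arguments: for $s<k$, each $y\in Y$ has only $s<k$ neighbors and must therefore lie in every $k$-dominating set, which easily forces $|\Omega|\leq 2$ in any $k$-coalition partition $\Omega$, while the $2$-partition $\{\{v\},V(K_{s,t})\setminus\{v\}\}$ is valid; for $s=k$, the value $C_{k}(K_{s,t})=4$ is Theorem~\ref{ajc}$(ii)$.

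For the lower bounds in the two remaining ranges, I would exhibit explicit $k$-coalition partitions. If $k+1\leq s\leq 3k-2$, take
\[
\Omega=\bigl\{V_1,\{x_s\},\{y_k\},\ldots,\{y_t\}\bigr\},\qquad V_1=\{x_1,\ldots,x_{s-1},y_1,\ldots,y_{k-1}\},
\]
noting that $V_1$ has $s-1\geq k$ vertices in $X$ and only $k-1$ in $Y$: it fails to $k$-dominate $x_s$, but joining $V_1$ with any listed singleton promotes the union to a $k$-dominating set; this yields $|\Omega|=t-k+3$. If $s\geq 3k-1$, take
\[
V_1=\{x_1,\ldots,x_k,y_1,\ldots,y_{k-1}\},\quad V_2=\{x_{k+1},\ldots,x_{2k-1},y_k,\ldots,y_{2k-1}\},
\]
and let the remaining $s-2k+1$ vertices of $X$ and $t-2k+1$ vertices of $Y$ be singletons; the $x$-singletons form $k$-coalitions with $V_2$, the $y$-singletons with $V_1$, and $V_1,V_2$ also form a coalition, yielding $|\Omega|=s+t-4k+4$.

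For the upper bound, let $\Omega$ be any $C_k(K_{s,t})$-partition and consider the $k$-coalition graph $H=C_kG(K_{s,t},\Omega)$. By the closing remark of Section~1, every $V\in\Omega$ forms at least one $k$-coalition, so $H$ has no isolated vertex; by Lemma~\ref{lemma-k-coal}, $\Delta(H)\leq t-k+2$. If the matching number satisfies $\nu(H)\leq 1$, then $H$ is a star or a triangle, and $|\Omega|\leq 1+(t-k+2)=t-k+3$. If $\nu(H)\geq 2$, pick disjoint coalition pairs $(V_1,V_2)$ and $(V_3,V_4)$ and distinguish whether some union $V_i\cup V_j$ covers $X$ or $Y$ entirely (``degenerate'') or neither does (``generic''). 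In the degenerate situation, say $X\subseteq V_1\cup V_2$, disjointness of $\Omega$ gives $V_3,V_4\subseteq Y$, and the $k$-domination of $V_3\cup V_4$ forces $V_3\cup V_4=Y$ (since $|V_3\cap X|+|V_4\cap X|=0<k$); hence $V_1\cup V_2=X$ and $\Omega=\{V_1,V_2,V_3,V_4\}$, so $|\Omega|=4$. In the generic situation each pair contributes at least $k$ vertices to $X$ and at least $k$ to $Y$, giving $\sum_{i=1}^4|V_i\cap X|\geq 2k$ and $\sum_{i=1}^4|V_i\cap Y|\geq 2k$; the remaining sets of $\Omega$ then collectively contain at most $s+t-4k$ vertices, and since each is nonempty, there are at most $s+t-4k$ of them, so $|\Omega|\leq s+t-4k+4$.

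Combining, $|\Omega|\leq\max\{t-k+3,\,s+t-4k+4\}$, which equals $t-k+3$ when $k+1\leq s\leq 3k-2$ and $s+t-4k+4$ when $s\geq 3k-1$, matching the constructions. The main obstacle I anticipate is the careful bookkeeping in the degenerate subcase: one must verify that $X\subseteq V_1\cup V_2$, combined with disjointness and the $k$-domination requirement on $V_3\cup V_4$, genuinely forces $\Omega=\{V_1,V_2,V_3,V_4\}$ rather than a strictly larger partition. The rest is a clean counting argument, with the only additional subtlety being to identify the crossover threshold $s=3k-1$ between the two regimes.
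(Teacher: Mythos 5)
Your proof is correct, and for the two main ranges it takes a genuinely different and shorter route than the paper's. The lower-bound constructions are essentially the ones in the paper (your partition for $k+1\leq s\leq 3k-2$ is a harmless variant of theirs), and your treatment of $s<k$ matches the paper's argument once you add the small missing step that $Y$ cannot lie inside a single set of the partition (such a set would be a $k$-dominating set when $t\geq k$; the subcase $t<k$ is trivial). The real divergence is in the upper bound. The paper first proves, via a lengthy case analysis on the sizes of the sets meeting $X\setminus(A\cup B)$ and $Y\setminus(A\cup B)$ (its Claim 1, with Cases 1--2 and Subcases 2.1--2.2), that for $k+1\leq s\leq 3k-2$ some coalition pair must contain all of $X$ or all of $Y$, and only then extracts the bound $t-k+3$; the range $s\geq 3k-1$ is handled by a further reduction to that analysis. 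You instead split on the matching number of the $k$-coalition graph $H$: if $\nu(H)\leq 1$ then $H$ is a star or a triangle and Lemma~\ref{lemma-k-coal} gives $|\Omega|\leq t-k+3$ directly, while if $\nu(H)\geq 2$ two disjoint coalition pairs either force $|\Omega|=4$ (when one of them swallows a partite set --- I checked that the degenerate subcase does pin down $\Omega=\{V_1,V_2,V_3,V_4\}$ exactly as you claim) or each occupy at least $2k$ vertices, giving $|\Omega|\leq s+t-4k+4$. Since $\max\{t-k+3,\,s+t-4k+4\}$ switches regimes precisely at $s=3k-1$, this single uniform bound reproduces the piecewise formula. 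Your approach buys a substantially simpler argument; what the paper's approach buys is the structural statement of its Claim 1, which may be of independent interest. One caveat: you dispose of $s=k$ by citing Theorem~\ref{ajc}$(ii)$ from the very paper whose part $(i)$ is being refuted here; the paper gives a self-contained proof of that case, and in context you should too, since your $\nu(H)$ dichotomy does not yield the bound $4$ when $s=k$ and a separate argument is genuinely needed there.
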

\begin{proof}
Clearly, $C_{k}(K_{s,t})=2$ for $t<k$. So, throughout the proof, we assume that $t\geq k$. Let $X=\{x_{1},\ldots,x_{s}\}$ and $Y=\{y_{1},\ldots,y_{t}\}$ be the partite sets of $K_{s,t}$, and let $\Theta$ be a $C_{k}(K_{s,t})$-partition. Note that no set in $\Theta$ is a $k$-dominating set as $k\geq2$.

Assume first that $s<k$, and that $A,B\in \Theta$ form a $k$-coalition in $K_{s,t}$. Since $|X|=s<k$, it follows that $Y\subseteq A\cup B$. Moreover, $Y\nsubseteq A$ and $Y\nsubseteq B$. In particular, both $A\cap Y$ and $B\cap Y$ are nonempty. Notice that no two sets in $\Theta\setminus \{A,B\}$ form a $k$-coalition as $Y\subseteq A\cup B$ and $|X|<k$. So, if there is a set $C\in \Theta\setminus \{A,B\}$, we may assume that it forms a $k$-coalition with $A$. This is a contradiction as $|X|<k$ and $B\cap Y\neq \emptyset$. The above argument shows that $C_{k}(K_{s,t})=2$ when $s<k$.

Now let $s=k$. Clearly, $\big{\{}\{x_{s}\},X\setminus \{x_{s}\},\{y_{t}\},Y\setminus \{y_{t}\}\big{\}}$ is a $k$-coalition partition of $K_{s,t}$. Therefore, $C_{k}(K_{s,t})\geq4$. Since $k\geq2$, it follows that $X\notin \Theta$ and $Y\notin \Theta$. Note that there exist two sets $A,B\in \Theta$ forming a $k$-coalition in $K_{s,t}$ such that $(A\cup B)\cap X\neq \emptyset$. Suppose that $X\nsubseteq A\cup B$. Due to this, since $|X|=k$ and because $A\cup B$ is a $k$-dominating set in $K_{s,t}$, it follows that $Y\subseteq A\cup B$. With this in mind, no two sets $C,D\in \Theta\setminus \{A,B\}$ form a $k$-coalition as $A\cup B$ contains at least one vertex from $X$ and $|X|=k$. Therefore, every set in $C\in \Theta\setminus \{A,B\}$ is a $k$-coalition partner only of $A$ or $B$. Without loss of generality, we may assume that such a set $C$ forms a $k$-coalition with $A$. Since $A\cup C$ is a $k$-dominating set in $K_{s,t}$ and because $B\cap Y\neq \emptyset$, it necessarily follows that $A\cup C=X$. This implies that $|\Theta|=3$, a contradiction. Therefore, $X\subseteq A\cup B$. Moreover, both $A\cap X$ and $B\cap X$ are nonempty. If two sets $C,D\in \Theta\setminus \{A,B\}$ form a $k$-coalition in $K_{s,t}$, then we must have $Y=C\cup D$, and hence $X=A\cup B$. This implies that $|\Theta|=4$. Otherwise, every set $C\in \Theta\setminus \{A,B\}$ is a $k$-coalition partner only of $A$ or $B$. Let such a set $C$ and $A$ form a $k$-coalition. If there exists a vertex $y_{i}\in Y\setminus(A\cup C)$, then $y_{i}$ has at least $k$ neighbors in $A$. This is impossible as $|X|=k$ and $B\cap X\neq \emptyset$. Hence, $Y=A\cup C$. This contradicts the fact that $|\Theta|\geq4$. In either case, we have proved that $C_{k}(K_{s,t})=4$ when $s=k$.

We now turn our attention to the case $s\geq k+1$. It is easy to see that
\[
\big{\{}\{x_{1},\ldots,x_{k},y_{1},\ldots,y_{k-1}\},\{x_{k+1},\ldots,x_{s}\},\{y_{k}\},\ldots,\{y_{t}\}\big{\}}
\]
is a $k$-coalition partition of $K_{s,t}$ into $t-k+3$ sets. Therefore,
\begin{equation}\label{Jeeg}
C_{k}(K_{s,t})\geq t-k+3
\end{equation}
for each $s\geq k+1$. On the other hand, if $s\geq2k-1$, we set $A'=\{x_{1},\ldots,x_{k},y_{1},\ldots,y_{k-1}\}$ and $B'=\{x_{k+1},\ldots,x_{2k-1},y_{k},\ldots,y_{2k-1}\}$. It is readily checked that
\[
\{A',B'\}\cup \big{\{}\{z\}\mid z\in(X\cup Y)\setminus(A'\cup B')\big{\}}
\]
is a $k$-coalition partition of $K_{s,t}$ of cardinality $s+t-4k+4$, and hence $C_{k}(K_{s,t})\geq s+t-4k+4$ when $s\geq2k-1$. The above argument guarantees that
\begin{equation}\label{EQ22}
C_{k}(K_{s,t})\geq \max\{t-k+3,s+t-4k+4\}
\end{equation}
when $s\geq2k+1$.

The following claim will turn out to be useful in the rest of the proof. Recall that $\Theta$ is a $C_{k}(K_{s,t})$-partition. \vspace{1mm}\\
\textbf{Claim 1.} \textit{If $k+1\leq s\leq3k-2$, then there exist two sets $A,B\in \Theta$ that form a $k$-coalition in $K_{s,t}$ with the property that $X\subseteq A\cup B$ or $Y\subseteq A\cup B$.}\vspace{1mm}\\
\textit{Proof of Claim 1.} Suppose, to the contrary, that both $X\setminus(A\cup B)$ and $Y\setminus(A\cup B)$ are nonempty for all $A,B\in \Theta$ that form a $k$-coalition in $K_{s,t}$. Let $A,B\in \Theta$ be any such sets. This implies, by definition, that $|(A\cup B)\cap X|\geq k$ and $|(A\cup B)\cap Y|\geq k$. We need to distinguish two cases depending on the behavior of $s$.\vspace{1mm}\\
\textit{Case 1.} $k+1\leq s\leq2k-1$. If there exist two sets $C,D\in \Theta\setminus \{A,B\}$ that form a $k$-coalition in $K_{s,t}$, then it necessarily follows that $|(C\cup D)\cap X|\geq k$, contradicting the fact that $|(A\cup B)\cap X|\geq k$ and $s\leq2k-1$. Therefore, each set in $\Theta\setminus \{A,B\}$ forms a $k$-coalition with only $A$ or $B$.

Let $C_{1},\ldots,C_{l}$ be all the sets in $\Theta\setminus \{A,B\}$ containing at least one vertex in $X\setminus(A\cup B)$. Suppose that there exists an index $i\in[l]$ such that $C_{i}$ is a singleton set. Without loss of generality, we may assume that $C_{i}$ forms a $k$-coalition with $A$. This, in view of the fact that $X\nsubseteq A\cup C_{i}$ and $Y\nsubseteq A\cup C_{i}$, implies that $|A\cap X|\geq k-1$ and $|A\cap Y|\geq k$. In addition, we have the equality $|A\cap X|=k-1$ as $A$ is not a $k$-dominating set in $K_{r,s}$.

Let $D$ be any set in $\Theta\setminus \{A,B\}$ that contains a vertex from $Y\setminus(A\cup B)$. If the set $D$ forms a $k$-coalition with $A$, then it must contain at least one vertex from $X\setminus(A\cup B)$ because of the fact that $Y\nsubseteq A\cup D$ and $|A\cap X|=k-1$. If $D$ forms a $k$-coalition with $B$, we deduce again that it contains at least one vertex from $X\setminus(A\cup B)$ as $Y\nsubseteq A\cup D$ and $|B\cap X|\leq k-1$. Letting $D_{1},\ldots,D_{p}\in \Theta$ be all such sets, we infer that $|\Theta|\leq2+p+s-k-p<3+t-k$, in contradiction to (\ref{Jeeg}). Therefore, $|C_{i}|\geq2$ for each $i\in[l]$. Analogously, every set in $\Theta\setminus \{A,B\}$ that contains a vertex from $Y\setminus(A\cup B)$ has at least two vertices. Summing up, we get
\[
|\Theta|\leq2+(s+t-|A\cup B|)/2\leq2+(s+t-2k)/2=2+(s+t)/2-k<3+t-k,
\]
a contradiction.\vspace{1mm}\\
\textit{Case 2.} $2k\leq s\leq3k-2$. Let $C$ be any set in $\Theta\setminus \{A,B\}$. If $C$ forms a $k$-coalition with a set $D\in \Theta\setminus \{A,B\}$, then both $(C\cup D)\cap X$ and $(C\cup D)\cap Y$ have at least $k$ vertices. In such a situation, we get $|\Theta|\leq4+s+t-4k<t-k+3$ since $s\leq3k-2$, contradicting (\ref{Jeeg}). So, we may assume that every set $C\in \Theta\setminus \{A,B\}$ is a $k$-coalition partner only of $A$ or $B$. Next, we consider the sets $C_{1},\ldots,C_{l}$ and $D_1,\ldots,D_p$ as defined in Case 1. We distinguish two cases.\vspace{1mm}\\
\textit{Subcase 2.1.} There exists a singleton set $C_{i}$ for some $i\in[l]$. We may assume, without loss of generality, that $C_{i}$ is a $k$-coalition partner of $A$. This implies that $|A\cap X|=k-1$ and $|A\cap Y|\geq k$. If all the sets $D_{1},\ldots,D_{p}$ form a $k$-coalition with $A$, each of them has at least one vertex from $X\setminus(A\cup B)$ because $|A\cap X|=k-1$ and $Y\cup D_{j}\neq \emptyset$ for each $j\in[p]$. In such a situation, we have $|\Theta|\leq2+p+s-k-p<3+t-k$, which contradicts the inequality (\ref{Jeeg}). In view of this, we turn our attention to the existence of some sets among $D_{1},\ldots,D_{p}$ that form $k$-coalitions with $B$. Renaming indices if necessary, we may assume that $D_{a+1},\ldots,D_{p}$ are $k$-coalition partners of $B$ for some $a\in \{0,\ldots,p-1\}$.

If there is an index $j\in \{a+1,\ldots,p\}$ such that $D_{j}$ is a singleton set, then we necessarily have $|B\cap X|\geq k$ and $|B\cap Y|=k-1$. Therefore, $|A\cup B|\geq4k-2$. Invoking this fact, we get $|\Theta|\leq2+s+t-4k+2<t-k+3$ as $s\leq3k-2$, which is impossible. Hence, $|D_{j}|\geq2$ for every $j\in \{a+1,\ldots,p\}$. We note that,\vspace{-2mm}
\begin{itemize}
\item[$(i)$] every $D_{i}$, with $i\leq a$, contains a vertex in $X\setminus(A\cup B)$ (as proved in the first paragraph of Subcase 2.1), and\vspace{-2mm}
\item[$(ii)$] the fact that $D_{j}$, with $j\in \{a+1,\ldots,p\}$, forms a $k$-coalition with $B$ shows that both $|(D_{j}\cup B)\cap X|$ and $|(D_{j}\cup B)\cap Y|$ are at least $k$.
\end{itemize}\vspace{-2mm}
Keeping $(i)$ and $(ii)$ in mind, it is now readily seen that
\[
A \cup B\cup D_{j} \cup \big{(} \bigcup_{i\in[p]\setminus \{j\}}D_{i} \big{)}
\]
contains at least $4k-1+2(p-1)$ vertices. Thus, $|\Theta|\leq2+p+s+t-(4k-1)-2(p-1)\leq s+t-4k+4<t-k+3$ in view of the fact that $p\geq1$ and $s\leq3k-2$, a contradiction.\vspace{1mm}\\
\textit{Subcase 2.2.} $|C_{i}|\geq2$ for every $i\in[l]$. Similarly to Subcase 2.1 (with $D_{i}$ instead of $C_{i}$), we reach a contradiction if there is a singleton set $D_{i}$ for some $i\in[p]$. Therefore, $|D_{i}|\geq2$ for every $i\in[p]$. This implies that $l+p\leq(s+t-|A\cup B|)/2$, and thus
\[
|\Theta|=2+l+p\leq2+(s+t-|A\cup B|)/2\leq2+(2t-2k)/2=t-k+2<t-k+3,
\]
a contradiction. $(\square)$\vspace{1mm}

Assume now, in view of Claim 1, that $A,B\in \Theta$ form a $k$-coalition in $K_{s,t}$ such that $X\subseteq A\cup B$ or $Y\subseteq A\cup B$. Without loss of generality, we can assume that $X\subseteq A\cup B$. Let there be two sets $C,D\in \Theta\setminus \{A,B\}$ that form a $k$-coalition. Since $X\subseteq A\cup B$ and because $C\cup D$ is a $k$-dominating set in $K_{s,t}$, it follows that $X=A\cup B$ and $Y=C\cup D$. Therefore, $|\Theta|=4\leq t-k+3$. So, we assume that every set in $\Theta\setminus \{A,B\}$ is a $k$-coalition partner only of $A$ or $B$. Let $C\in \Theta\setminus \{A,B\}$ form a $k$-coalition with $A$. Since $X\setminus A\neq \emptyset$, it follows that $|(A\cup C)\cap Y|\geq k$. Therefore, $|\Theta|\leq 3+t-k$. In each case, we have the desired equality due to (\ref{Jeeg}).

It remains for us to consider the case $s\geq3k-1$. This together with the inequality (\ref{EQ22}) leads to $C_{k}(K_{s,t})\geq s+t-4k+4$. If there exist two sets $C,D\in \Theta\setminus \{A,B\}$ that form a $k$-coalition in $K_{s,t}$, we get $|\Theta|\leq4+s+t-4k$ as it is easy to see that $|A\cup B|+|C\cup D|\geq4k$. Therefore, we can assume that each set in $\Theta\setminus \{A,B\}$ forms a $k$-coalition only with $A$ or $B$. In such a situation, an argument similar to that of Case 2 in the proof of Claim 1 reaches a contradiction or leads to the desired inequality $|\Theta|\leq s+t-4k+4$. Therefore, we get the desired equality $C_{k}(K_{s,t})=|\Theta|=4+s+t-4k$ when $s\geq3k-1$. This completes the proof.
\end{proof}


A \textit{total $k$-dominating set} $S$ and a \textit{total $k$-coalition} in a graph $G$ with $\delta(G)\geq k$ can be defined by replacing the condition ``every vertex $v\in V(G)\setminus S$", in the definition of a $k$-dominating set, with ``every vertex $v\in V(G)$". A \textit{total $k$-coalition partition} in $G$ is a partition $\Omega$ of $V(G)$ in which every set forms a total $k$-coalition with another set. The \textit{total $k$-coalition number} $TC_{k}(G)$ equals the maximum cardinality taken over all total $k$-coalition partitions in $G$. When $k=1$, this concept coincides with total coalition in graphs (see \cite{HJ} and the references therein). This concept, as a sequel to total coalition and as the total version of $k$-coalition, was investigated in \cite{bsb}.

Using the techniques from the proof of Theorem~\ref{CBG}, we can prove a similar result for the total $k$-coalition number of complete bipartite graphs. However, for the sake of completeness, we present the proof in detail.

\begin{theorem}\label{Total}
For any integers $s$, $t$ and $k$ with $t\geq s\geq k$,
\[
TC_{k}(K_{s,t})=\begin{cases}
t-k+2, & \textrm{if}\ \ k\leq s\leq3k-2;\\
s+t-4k+4, & \textrm{if}\ \ s\geq3k-1.
\end{cases}
\]
\end{theorem}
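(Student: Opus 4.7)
My approach will closely follow the template of Theorem~\ref{CBG}, exploiting the simple characterization that a set $S \subseteq V(K_{s,t})$ is total $k$-dominating if and only if $|S \cap X| \geq k$ and $|S \cap Y| \geq k$, where $X$ and $Y$ denote the partite sets of sizes $s$ and $t$ respectively. Throughout, I let $\Theta$ be a $TC_k(K_{s,t})$-partition and denote $a_X = |A \cap X|$, $a_Y = |A \cap Y|$, and similarly for other parts.

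For the lower bounds, I will exhibit two explicit total $k$-coalition partitions. When $k \leq s \leq 3k - 2$, I take the partition $\{V_1, V_2, \{y_{k+1}\}, \ldots, \{y_t\}\}$, where $V_1 = \{x_1, \ldots, x_k, y_1, \ldots, y_{k-1}\}$ and $V_2 = \{x_{k+1}, \ldots, x_s, y_k\}$ (with $V_2 = \{y_k\}$ if $s = k$). One checks that $V_1 \cup V_2 = X \cup \{y_1, \ldots, y_k\}$ is total $k$-dominating, $V_1$ and $V_2$ individually are not (the $Y$-part in each is smaller than $k$), and every singleton $\{y_i\}$ with $i \geq k+1$ forms a total $k$-coalition with $V_1$, since $V_1 \cup \{y_i\}$ has exactly $k$ vertices in each of $X$ and $Y$. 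This gives $t - k + 2$ parts. For $s \geq 3k - 1$, I mimic the corresponding construction in the proof of Theorem~\ref{CBG}: take $V_1 = \{x_1, \ldots, x_k, y_1, \ldots, y_{k-1}\}$, $V_2 = \{x_{k+1}, \ldots, x_{2k-1}, y_k, \ldots, y_{2k-1}\}$, and make each remaining vertex its own singleton. Singletons in $X$ partner with $V_2$ and singletons in $Y$ partner with $V_1$, yielding $s + t - 4k + 4$ parts.

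For the upper bounds, the key intermediate claim (analogous to Claim~1 in the proof of Theorem~\ref{CBG}) is that when $k \leq s \leq 3k - 2$, some total $k$-coalition pair $A, B \in \Theta$ satisfies $X \subseteq A \cup B$ or $Y \subseteq A \cup B$. Granting this, say $X \subseteq A \cup B$ without loss of generality. Then every $C \in \Theta \setminus \{A, B\}$ satisfies $C \subseteq Y$, and no two such $C, D$ can form a total $k$-coalition since $(C \cup D) \cap X = \emptyset$. Hence every set in $\Theta \setminus \{A, B\}$ partners with $A$ or $B$, and
\[
|\Theta| - 2 \leq |\Theta \setminus \{A, B\}| \leq \sum_{C \in \Theta \setminus \{A, B\}} |C| = t - a_Y - b_Y \leq t - k,
\]
where the last inequality uses total $k$-domination of $A \cup B$; this yields $|\Theta| \leq t - k + 2$. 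For $s \geq 3k - 1$, the upper bound $|\Theta| \leq s + t - 4k + 4$ follows from the same dichotomy used in Case~2 of Claim~1 in the proof of Theorem~\ref{CBG}: either two disjoint total $k$-coalition pairs are present, in which case $|A \cup B \cup C \cup D| \geq 4k$ and the remaining singletons are limited; or every part in $\Theta \setminus \{A, B\}$ partners with $A$ or $B$, in which case careful counting of $X$- and $Y$-vertices consumed by singleton versus non-singleton partners produces the same bound.

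The main obstacle is the proof of the intermediate claim, which I will attack by contradiction: assume every total $k$-coalition pair $A, B$ satisfies $X \not\subseteq A \cup B$ and $Y \not\subseteq A \cup B$, and derive $|\Theta| < t - k + 2$. In the range $k \leq s \leq 2k - 1$, the inequality $|(A \cup B) \cap X| \geq k$ together with $s < 2k$ forbids two disjoint total $k$-coalition pairs, forcing every other part to partner with $A$ or $B$; analysis of whether any such partner is a singleton fixes $a_X$ or $b_X$ to equal $k-1$, after which counting the $X$-vertices outside $A \cup B$ available to absorb the remaining parts yields the strict inequality. In the range $2k \leq s \leq 3k - 2$, a disjoint second pair $C, D$ is combinatorially possible, but then $|A \cup B \cup C \cup D|$ uses enough vertices of both colors to again violate the lower bound; the bookkeeping here mirrors Case~2 of the claim in Theorem~\ref{CBG} essentially verbatim. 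The principal subtlety, compared to the $C_k$ argument, is that no part of $\Theta$ may itself be a total $k$-dominating set (every part must have a total $k$-coalition partner), which slightly tightens but does not fundamentally alter the case analysis.
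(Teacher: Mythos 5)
Your lower-bound constructions and your final counting steps are sound and essentially match the paper's, but the pivot of your upper-bound argument --- the intermediate claim that for $k\leq s\leq 3k-2$ some total $k$-coalition pair $A,B\in\Theta$ satisfies $X\subseteq A\cup B$ or $Y\subseteq A\cup B$ --- is false, so your proposed proof of it by contradiction cannot succeed. Take $k=2$, $s=4$ (so $s=3k-2$) and any $t\geq 4$, and consider the partition $\Theta=\big\{\{x_1,x_2,y_1\},\{x_3,y_2\},\{x_4,y_3\},\{y_4\},\ldots,\{y_t\}\big\}$ of $K_{4,t}$. No set in $\Theta$ is total $2$-dominating, every other set forms a total $2$-coalition with $\{x_1,x_2,y_1\}$, and $|\Theta|=t=t-k+2$, so $\Theta$ is a $\TC_2(K_{4,t})$-partition; yet every total $2$-coalition pair in $\Theta$ misses a vertex of $X$ and a vertex of $Y$. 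The structural reason your contradiction is unreachable is an off-by-one discrepancy with Theorem~\ref{CBG}: there the lower bound in this range is $t-k+3$, so the bookkeeping in Claim~1 of that proof lands on quantities such as $2+(s+t)/2-k$ and $2+s+t-4k+2$ that are \emph{strictly} below $t-k+3$; in the total setting the lower bound drops to $t-k+2$ (because the big set $X\cup\{y_1,\ldots,y_{k-1}\}$ can no longer be split into two coalition partners), and those same quantities can equal $t-k+2$, as the example realizes. Hence ``derive $|\Theta|<t-k+2$'' fails in at least one branch, and the claim you want to extract from the contradiction is simply not true of every maximum partition.

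The gap is repairable, and the repair is exactly what the paper does: abandon the structural claim and run the same dichotomy directly as an upper-bound argument. Fix a coalition pair $A,B$; if $X\subseteq A\cup B$ or $Y\subseteq A\cup B$, your counting gives $|\Theta|\leq t-k+2$; otherwise the singleton/non-singleton bookkeeping you outline yields $|\Theta|\leq t-k+2$ \emph{non-strictly} in each branch, which is all that is needed since the two cases together cover everything. With that restructuring (and the analogous adjustment for $s\geq 3k-1$, where the paper likewise concludes $|\Theta|\leq s+t-4k+4$ branch by branch rather than via a contradiction), your argument coincides with the paper's proof.
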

\begin{proof}
Let $X=\{x_{1},\ldots,x_{s}\}$ and $Y=\{y_{1},\ldots,y_{t}\}$ be the partite sets of $K_{s,t}$. We first observe, by the structure of bipartite graphs and the definition of total $k$-coalition, that $|(A\cup B)\cap X|\ge k$ and $|(A\cup B)\cap Y|\geq k$ for any two subsets $A,B\subseteq V(K_{s,t})$ that form a total $k$-coalition in $K_{s,t}$. It is easy to see that
\[
\big{\{}X\cup \{y_{1},\ldots,y_{k-1}\},\{y_{k}\},\ldots,\{y_{t}\}\big{\}}
\]
is a total $k$-coalition partition of $K_{s,t}$ into $t-k+2$ sets. Therefore,
\begin{equation}\label{Jeeg2}
\TC_{k}(K_{s,t})\geq t-k+2
\end{equation}
for each $s\geq k$. If $s\geq2k-1$, then we set 
\[
A'=\{x_{1},\ldots,x_{k},y_{1},\ldots,y_{k-1}\} \hspace*{0.25cm} \mbox{and}  \hspace*{0.25cm} B'=\{x_{k+1},\ldots,x_{2k-1},y_{k},\ldots,y_{2k-1}\}.
\]
It is then readily checked that
\[
\{A',B'\}\cup \big{\{}\{z\}\mid z\in(X\cup Y)\setminus(A'\cup B')\big{\}} 
\]
is a total $k$-coalition partition of $K_{s,t}$ of cardinality $s+t-4k+4$, and hence $\TC_{k}(K_{s,t})\geq s+t-4k+4$ when $s\geq2k-1$. The above argument guarantees that
\begin{equation}\label{EQ11}
\TC_{k}(K_{s,t})\geq \max\{t-k+2,s+t-4k+4\}
\end{equation}
for $t\geq s\geq2k-1$.

Let $\Theta$ be a $\TC_{k}(K_{s,t})$-partition and let $A,B$ be two sets of $\Theta$. We distinguish three cases depending on $s$.\vspace{1mm}\\
\textit{Case 1.} $s\leq2k-1$. If there exist two sets $C,D\in \Theta\setminus \{A,B\}$ that form a total $k$-coalition in $K_{s,t}$, then it necessarily follows that $|(C\cup D)\cap X|\geq k$, contradicting the fact that $|(A\cup B)\cap X|\geq k$ and $s\leq2k-1$. Therefore, each set in $\Theta\setminus \{A,B\}$ forms a total $k$-coalition only with $A$ or $B$.

If $X\subseteq A\cup B$, then $|V(K_{s,t})|-|A\cup B|\leq t-k$. In such a situation, it is clear that $|\Theta|\leq2+t-k$. Therefore, $\TC_{k}(K_{s,t})=t-k+2$ by (\ref{Jeeg2}). Furthermore, in a similar fashion, we have the desired equality if $Y\subseteq A\cup B$. Due to this, we may assume that $X\setminus(A\cup B)\ne\emptyset$ and $Y\setminus(A\cup B)\neq \emptyset$.

Let $C_{1},\ldots,C_{l}$ be all the sets in $\Theta\setminus \{A,B\}$ that contain at least one vertex in $X\setminus(A\cup B)$. Assume that there exists an index $i\in[l]$ such that $C_{i}$ is a singleton set. Without loss of generality, we may assume that $C_{i}$ forms a total $k$-coalition with $A$. This necessarily implies that $|A\cap X|=k-1$ and $|A\cap Y|\geq k$. Let $D$ be any set in $\Theta\setminus \{A,B\}$ that contains a vertex from $Y\setminus(A\cup B)$. If it forms a total $k$-coalition with $A$, then the set $D$ must contain at least one vertex from $X\setminus(A\cup B)$ as $|A\cap X|=k-1$. If it forms a total $k$-coalition with $B$, we again deduce that it contains at least one vertex from $X\setminus(A\cup B)$ as $|B\cap X|\leq k-1$. Letting $D_{1},\ldots,D_{p}\in \Theta$ be such sets, we infer that $|\Theta|\leq2+p+s-k-p\leq2+t-k$. In view of this, we may assume that $|C_{i}|\geq2$ for each $i\in[l]$. Analogously, we can assume that every set in $\Theta\setminus \{A,B\}$ that contains a vertex from $Y\setminus(A\cup B)$ has at least two vertices. This implies that
\begin{center}
$|\Theta|=2+\ell+p\leq2+(s+t-2k)/2=2+(s+t)/2-k\leq2+t-k$,
\end{center}
resulting in $\TC_{k}(K_{s,t})=t-k+2$.\vspace{1mm}\\
\textit{Case 2.} $2k\leq s\leq3k-2$. This trivially implies that $k\geq2$. Assume first that there exist two sets $C,D\in \Theta\setminus \{A,B\}$ that form a total $k$-coalition in $K_{s,t}$. Since both $|A\cup B|$ and $|C\cup D|$ are greater than or equal to $2k$, it follows that $\TC_{k}(K_{s,t})=|\Theta|\leq4+s+t-4k\leq t-k+2$. We now assume that every set in $\Theta\setminus \{A,B\}$ forms a total $k$-coalition only with $A$ or $B$.

As before, let $C_{1},\ldots,C_{l}$ and $D_{1},\ldots,D_{p}$ be all the sets in $\Theta\setminus \{A,B\}$ that have at least one vertex from $X\setminus(A\cup B)$ and $Y\setminus(A\cup B)$, respectively. We need to consider two more possibilities depending on the behavior of $C_{1},\ldots,C_{l}$.\vspace{1mm}\\
\textit{Subcase 2.1.} There exists a singleton set $C_{i}$ for some $i\in[l]$. We may assume, without loss of generality, that $C_{i}$ is a total $k$-coalition partner of $A$. This implies that $|A\cap X|=k-1$ and $|A\cap Y|\geq k$. If all the sets $D_{1},\ldots,D_{p}$ form a total $k$-coalition with $A$, each of them has at least one vertex from $X\setminus(A\cup B)$ because $|A\cap X|=k-1$. In such a situation, $|\Theta|\leq2+p+s-k-p\leq2+t-k$, resulting in the desired equality due to (\ref{Jeeg2}). In view of this, we turn our attention to the existence of some sets among $D_{1},\ldots,D_{p}$ that form total $k$-coalitions with $B$. Renaming indices if necessary, we may assume that $D_{a+1},\ldots,D_{p}$ are total $k$-coalition partners of $B$ for some $a\in \{0,\ldots,p-1\}$.

If there is an index $j\in \{a+1,\ldots,p\}$ such that $D_{j}$ is a singleton set, then we necessarily have $|B\cap X|\geq k$ and $|B\cap Y|=k-1$. Therefore, $|A\cup B|\geq4k-2$. Invoking this fact, we get $|\Theta|\leq2+s+t-4k+2\leq t-k+2$ as $s\leq3k-2$. Hence, we can assume that $|D_{j}|\geq2$ for every $j\in \{a+1,\ldots,p\}$. By the above arguments, we have\vspace{-2mm}
\begin{itemize}
\item[$(i)$] every $D_{i}$, with $i\leq a$, contains a vertex in $X\setminus(A\cup B)$ (as proved in the first paragraph of Subcase 2.1), and\vspace{-2mm}
\item[$(ii)$] the fact that $D_{j}$, with $j\in \{a+1,\ldots,p\}$, forms a $k$-coalition with $B$ shows that both $|(D_{j}\cup B)\cap X|$ and $|(D_{j}\cup B)\cap Y|$ are at least $k$.
\end{itemize}\vspace{-2mm}
With $(i)$ and $(ii)$ in mind, it is now easily observed that
\[
A \cup B \cup D_{j} \cup \big{(}\bigcup_{i\in[p]\setminus \{j\}}D_{i}\big{)}
\]
contains at least $4k-1+2(p-1)$ vertices. Thus, $|\Theta|\leq2+p+s+t-(4k-1)-2(p-1)\leq t-k+2$ in view of the fact that $p\geq1$ and $s\leq3k-2$.\vspace{1mm}\\
\textit{Subcase 2.2.} $|C_{i}|\geq2$ for every $i\in[l]$. Similarly to Subcase 2.1, we derive the upper bound $t-k+2$ for $|\Theta|$ if there is a singleton set $D_{i}$ for some $i\in[p]$. In view of this, we may also assume that $|D_{i}|\geq2$ for every $i\in[p]$. In such a situation, we have $l+p\leq(s+t-|A\cup B|)/2$, and thus
\begin{center}
$|\Theta|=2+l+p\leq2+(s+t-|A\cup B|)/2\leq2+(2t-2k)/2=t-k+2$.
\end{center}

In either case, we have shown that $\TC_{k}(K_{s,t})=t-k+2$ when $k\leq s\leq3k-2$.\vspace{1mm}\\
\textit{Case 3.} $s\geq3k-1$. In this case, we infer from (\ref{EQ11}) that $\TC_{k}(K_{s,t})\geq s+t-4k+4$. If there exist two sets $C,D\in \Theta\setminus \{A,B\}$ that form a total $k$-coalition in $K_{s,t}$, we get $|\Theta|\leq4+s+t-4k$ because both $A\cup B$ and $C\cup D$ have at least $2k$ vertices. Therefore, we can assume that each set in $\Theta\setminus \{A,B\}$ forms a total $k$-coalition only with $A$ or $B$. In this situation, an argument similar to that in Case 2 shows that $|\Theta|\leq4+s+t-4k$ or $|\Theta|\leq t-k+2$. However, $t-k+2\leq4+s+t-4k$ as $s\geq3k-1$. Therefore, we get the desired equality $\TC_{k}(K_{s,t})=|\Theta|=4+s+t-4k$. This completes the proof.
\end{proof}


\end{document}